\newcommand{\da}{\text{\ding{172}}\xspace}
\newcommand{\db}{\text{\ding{173}}\xspace}
\newcommand{\dc}{\text{\ding{174}}\xspace}
\newcommand{\x}{\textcolor{black}}
\newcommand{\bE}{\mathbb{E}}
\newcommand{\bP}{\mathbb{P}}
\newcommand{\bI}{\mathbf{1}}
\newcommand{\cS}{\cal{S}}
\newcommand{\cT}{\cal{T}}
\newcommand{\OFF}{\mathsf{OFF}}
\newcommand{\ON}{\mathsf{ON}}
\newcommand{\hpolicy}{\mathsf{STP}}
\begin{document}

\RUNAUTHOR{Truong, Wang}
\RUNTITLE{}
\TITLE{Prophet Inequality with Correlated Arrival Probabilities, with Application to Two Sided Matchings}
\ARTICLEAUTHORS{%
\AUTHOR{Van-Anh Truong, Xinshang Wang} \AFF{Department
of Industrial Engineering and Operations Research, Columbia
University, New York, NY, USA, \EMAIL{vatruong@ieor.columbia.edu, xw2230@columbia.edu}
\URL{}}
}
\ABSTRACT{The classical Prophet Inequality arises from a fundamental problem in optimal-stopping theory.  In this problem, a gambler sees a finite sequence of independent, non-negative random variables.  If he stops the sequence at any time, he collects a reward equal to the most recent observation. The Prophet Inequality states that, knowing the distribution of each random variable, the gambler can achieve at least half as much reward in expectation, as a prophet who knows the entire sample path of random variables \citep{krengel1978semiamarts}. \x{In this paper, we prove a corresponding bound for \emph{correlated} non-negative random variables.} 
We analyze two methods for proving the bound, a constructive approach, which produces a worst-case instance, and a reductive approach, which characterizes a certain submartingale arising from the reward process of our online algorithm. 

We apply this new prophet inequality to the design of algorithms for a class of two-sided bipartite matching problems that underlie \emph{online task assignment problems}.  In these problems, demand units of various types arrive randomly and sequentially over time according to some stochastic process.  Tasks, or supply units, arrive according to another stochastic process. Each demand unit must be irrevocably matched to a supply unit or rejected.  The match earns a reward that depends on the pair.  The objective is to maximize the total expected reward over the planning horizon.  The problem arises in mobile crowd-sensing and crowd sourcing contexts, where workers and tasks must be matched by a platform according to various criteria.  We derive the first online algorithms with worst-case performance guarantees for our class of two-sided bipartite matching problems. 
}

\maketitle
\section{Introduction}
The classical Prophet Inequality arises from a fundamental problem in optimal-stopping theory.  In this problem, a gambler sees a finite sequence of independent, non-negative random variables.  If he stops the sequence at any time, he collects a reward equal to the most recent observation. The Prophet Inequality states that, knowing the distribution of each random variable, the gambler can achieve at least half as much reward in expectation, as a prophet who knows the entire sample path of random variables \citep{krengel1978semiamarts}.  The classical prophet inequality with independent random variables was proved by \cite{krengel1977}.  Its importance arises from its role as a primitive in a wide range of decision problems.  Since the appearance of the first result, various versions of the prophet inequality has been proved. \cite{hill1983stop} study the inequality for independent, uniformly bounded random variables.   \cite{rinott1987comparisons} prove a version for bounded negatively-dependent random variables.  \cite{samuel1991prophet} obtain general results for negatively dependent random variables, and provide some examples for the case of positively dependent variables.  
\x{In this paper, we study a version of prophet inequality where the sequence of random variables are modeled as a customer arrival process and can be arbitrarily correlated. The specification of our prophet inequality will be made clear in the problem formulation.}

We apply this new prophet inequality to the design of algorithms for a class of two-sided bipartite matching problems underlying \emph{online task assignment problems} (OTA).  In these problems, demand units of various types arrive randomly and sequentially over time according to some stochastic process.  Tasks, or supply units, arrive according to another stochastic process. Each demand unit must be irrevocably matched to a supply unit or rejected.  The match earns a reward that depends on the pair.  The objective is to maximize the total expected reward over the planning horizon.  The problem arises in mobile crowd-sensing and crowd-sourcing contexts, where workers and tasks that arrive randomly overtime and must be matched by a platform according to various criteria.  For example, the marketplaces Upwork, Fiverr, and Freelancer  match providers with customers of professional services.  Walmart evaluated a proposal to source its own customers to deliver orders \citep{barr2013walmart}.  The mobile platforms Sensorly, Vericell, VTrack, and PIER  outsource the task of collecting analyzing data, called \emph{sensing}, to millions of mobile users.  Enabled by information technology, these crowd-sourcing and crowd-sensing businesses are revolutionalizing the traditional marketplace.  For example, Freelancers constitute 35\% of the U.S. workforce and have generated a trillion dollars in income as of 2015 \citep{pofeldt2016freelancers}.  A survey found that 73\% of freelancers have found work more easily because of technology \citep{pofeldt2016freelancers}.


The \emph{two-sided matching problem} that underlies many examples of OTA is very difficult to solve optimally, due to three main reasons.  First, given the many characteristics of both demand and supply types and their importance in determining the quality of a match, the decision problem must keep track of a vast amount of information, including the current state of supply and future demand and supply arrivals.  Second, both demand and supply processes may change over time, so that the decision-making environment might be constantly changing.  Third, demand units tend to be time-sensitive and unmatched supply units might also leave the system after a time, so that they have a finite period of availability.

Our contributions in this paper are as follow:
\begin{itemize}
\item \x{We prove the first prophet inequality for a class of arbitrarily correlated non-negative random variables.}
We analyze two methods for proving the bound, a constructive approach, which produces a worst-case instance, and a reductive approach, which characterizes a certain submartingale arising from the reward process of our online algorithm.  

\item We formulate a new model of bipartite matching with non-homogenous Poisson arrivals for both demand and supply units.  Supply units can wait a deterministic amount of time, whereas demand units must be matched irrevocably upon arrival.  Decisions are not batched and must be made for one demand unit at at time.  Our model underlies an important class of online task assignment problems for crowd-sourcing and crowd-sensing applications.

\item We derive the first online algorithms with worst-case performance guarantees for our class of two-sided bipartite matching problems.  We prove that our algorithms have expected reward no less than $\frac{1}{4}$ times that of an optimal offline policy, which knows all demand and supply arrivals upfront and makes optimal decisions given this information. 

\item We provide numerical experiments showing that despite the conservative provable ratio of $1/4$, our online algorithm captures about half of the offline expected reward.  We propose improved algorithms that \x{in the experiments} capture $65\%$ to $70\%$ of the offline expected reward. Moreover, the improved algorithms outperform the greedy and the bid-price heuristics in all scenarios. These results demonstrate the advantage of using our online algorithms as they have not only optimized performance in the worst-case scenario, but also satisfactory performance in average-case scenarios.
\end{itemize}

\section{Literature Review}  
We review five streams of literature that are most closely related to our problem class.

\subsection{Static matching}
A variety of matching problems have been studied in static settings, for example, college-admissions problems, marriage problems, and static assignment problems.  In these problems, the demand and supply units are known.  The reward of matching each demand with each supply unit is also known.  The objective is to find a maximum-reward matching.  See \cite{abdulkadiroglu2013matching} for a recent review.  Our setting differs in that demand units arrive randomly over time and decisions must be made before all the units have been fully observed.

\subsection{Dynamic assignment}
Dynamic-assignment problems are a class of problems in which a set of resources must be dynamically assigned to a stream of tasks that randomly arrive over time.  These problems have a long history, beginning with Derman, Lieberman and Ross (1972)\nocite{derman1972sequential}.  See \cite{su2005patient} for a recent review of this literature.

\cite{spivey2004dynamic} study a version of the dynamic assignment problem in which the resources may arrive randomly over time.  They develop approximate-dynamic-programming heuristics for the problem.  They do not derive performance bounds for their heuristics.

\cite{anderson2013efficient} study a specialized model in which supply and demand units are identical, and arrivals are stationary over time.  They characterize the performance of the greedy policy under various structures for the demand-supply graph, where the objective is to minimize the total waiting time for all supply units.

\cite{akbarpour2014dynamic} analyze a dynamic matching problem for which they derive several broad insights.   They analyze two algorithms for which they derive bounds on the relative performance under various market conditions.  Their work differs from ours in four ways.  First, they assume that arrivals are stationary where as we allow non-stationary arrivals.  Second, they assume that demand units are identical except for the time of arrivals and supply units are also identical except for the time of arrival, whereas we allow heterogeneity among the units.  Finally, they study an unweighted matching problem in which each match earns a unit reward, whereas we study a more general weighted matching problem.  Finally, their results hold in asymptotic regimes, where the market is large and the horizon is long, whereas our results hold in any condition.

More recently  \cite{hu2015dynamic} study a dynamic assignment problem for two-sided markets similar to ours. They also allow for random, non-stationary arrivals of demand and supply units.  They derive structural results for the optimal policy and asymptotic bounds.  We depart from both of the above papers in focusing on providing algorithms with theoretical performance guarantees on all problem instances. 

\cite{baccara2015optimal} study a dynamic matching problem in which demand units can wait, and there is a tradeoff between waiting for a higher-quality match, and incurring higher waiting costs.  Their setting is limited to just two types of units (demand or supply), whereas we allow arbitrarily many types.  They also assume stationary arrivals whereas we allow non-stationary arrivals.

\subsection{Online Matching}
In online matching, our work fundamentally extends the class of problems that have been widely studied. 
In existing online matching problems, the set of available supply units is known and corresponds to one set of nodes. Demand units arrive one by one, and correspond to a second set of nodes. As each demand node arises, its adjacency to the resource nodes is revealed. Each edge has an associated weight. The system must match each demand node irrevocably to an adjacent supply node. The goal is to maximize the total weighted or unweighted size of the matching.  

When demands are chosen by an adversary, the online \emph{unweighted} bipartite matching problem is originally shown by Karp, Vazirani and Vazirani (1990)\nocite{karp1990optimal} to have a worst-case relative reward of $0.5$ for deterministic algorithms and $1-1/e$ for randomized algorithms.  The \emph{weighted} this problem cannot be bounded by any constant \citep{mehta2012online}. Many subsequent works have tried to design algorithms with bounded relative reward for this problem under more regulated demand processes.

Three types of demand processes have been studied. The first type of demand processes studied is one in which each demand node is independently and identically chosen with replacement from a \emph{known} set of nodes. Under this assumption, \citep{jaillet2013online, manshadi2012online, bahmani2010improved, feldman2009online} propose online algorithms with worst-case relative reward higher than $1-1/e$ for the unweighted problem. Haeupler, Mirrokni, Vahab and Zadimoghaddam (2011)\nocite{haeupler2011online} study online algorithms with worst-case relative reward higher than $1-1/e$ for the weighted bipartite matching problem.

The second type of demand processes studied is one in which the demand nodes are drawn randomly without replacement from an unknown set of nodes.  This assumption has been used in the secretary problem (Kleinberg 2005, Babaioff, Immorlica, Kempe, and Kleinberg 2008)\nocite{kleinberg2005multiple,babaioff2008online}, ad-words problem \citep{goel2008online} and bipartite matching problem (Mahdian and Yan 2011, Karande, Mehta, and Tripathi 2011)\nocite{mahdian2011online,karande2011online}.

A variation to the second type of demand processes studied is one in which each demand node requests a very small amount of resource. This assumption, called the \emph{small bid} assumption, together with the assumption of randomly drawn demands, lead to polynomial-time approximation schemes (PTAS) for problems such as ad-words \citep{Devanur09theadwords}, stochastic packing (Feldman, Henzinger, Korula, Mirrokni, and Stein 2010)\nocite{feldman2010online}, online linear programming (Agrawal, Wang and Ye 2009)\nocite{agrawal2009dynamic}, and packing problems \citep{molinaro2013geometry}. Typically, the PTAS proposed in these works use dual prices to make allocation decisions. Devanur, Jain, Sivan, and Wilkens (2011)\nocite{devanur2011near} study a resource-allocation problem in which the distribution of nodes is allowed to change over time, but still needs to follow a requirement that the distribution at any moment induce a small enough offline objective value. They then study the asymptotic performance of their algorithm. In our model, the amount capacity requested by each customer is not necessary small relative to the total amount of capacity available.

The third type of demand processes studied are \emph{independent}, non-homogenous Poisson processes. \x{Alaei, Hajiaghayi and Liaghat (2012)\nocite{alaei2012online}, Wang, Truong and Bank (2015)\nocite{wangTB2015} and Stein, Truong and Wang (2017)\nocite{SteinTW2015} propose online algorithms for online allocation problems. } 
\x{ We depart from these papers in two major ways. The algorithms in these papers consist of two main steps.  In the first step, they solve a deterministic assignment LP to find the probabilities of routing each demand to each supply unit.  Given this routing, in the second step, they make an online decision} to determine whether to match a routed demand unit to a supply unit at any given time.    In contrast, in the first step, we find \emph{conditional probabilities} of routing demand to supply units, given the set of supply units that have arrived at any given time.  We approximate these conditional probabilities because they are intractable to compute directly.  In the second step, after routing demands to supply units according to these conditional probabilities, \x{we design an admission algorithm based on the solution of our prophet inequality that, unlike existing admission techniques, deals with  \emph{correlated demand arrivals}.}

\subsection{Online Task Assignment}
This is a subclass of online matching problems that has seen an explosion of interest in recent years.  Almost all of these works model either the tasks or the workers as being fixed.  \cite{ho2012online, assadi2015online, hassan2014multi, manshadi2012online} study variations of OTA problems.  \cite{singer2013pricing} consider both pricing and allocation decisions for OTA.  \cite{singla2013truthful} study both learning and allocation decisions for OTA.  \cite{zhao2014crowdsource, subramanian2015online} study auction mechanism for OTA.

\cite{tong2016online} study OTA when the arrivals of both workers and tasks are in random order.  Their algorithms achieve a competitive ratio of $1/4$.  Concurrent with our work, \cite{dickerson2018assigning} study a similar model with two-sided, i.i.d. arrivals.  They prove that a non-adaptive algorithm achieves a competitive ratio of 0.295.  Further, they show that no online algorithm can achieve a ratio better than 0.581, even if all rewards are the same.  Note that both the models of \cite{tong2016online} and \cite{dickerson2018assigning} are more restrictive than ours.   In a model with time-varying arrivals such as ours, non-adaptive algorithms such as the one proposed by \cite{dickerson2018assigning}, or a greedy algorithms such as one of the algorithms proposed by \cite{tong2016online}, are unlikely to perform well.

\subsection{Revenue Management}

Our work is also related to the revenue management literature. We refer to \cite{TalluriV2004} for a comprehensive review of this literature. 

Our work is related to the still limited literature on designing policies for revenue management that are have worst-case performance guarantees. \cite{ball2009toward} analyze online algorithms for the single-leg revenue-management problem. Their performance metric compares online algorithms with an optimal offline algorithm under the worst-case instance of demand arrivals. They prove that the competitive ratio cannot be bounded by any constant when there are arbitrarily many customer types.  Qin, Zhang, Hua and Shi (2015)\nocite{CongApproximationRM} study approximation algorithms for an admission control problem for a single resource when customer arrival processes can be correlated over time.  They prove a constant approximation ratio for the case of two customer types, and also for the case of multiple customer types with specific restrictions. They allow only one type of resource to be allocated. Gallego, Li, Truong and Wang (2015)\nocite{gallegoLTW2015} study online algorithms for a personalized choice-based revenue-management problem.  They allow multiple customer types and products, and non-stationary independent demand arrivals.  They allow customers to select from assortments of offered products according to a general choice model.  They prove that an LP-based policy earns at least half of the expected revenue of an optimal policy that has full hindsight.

\section{Prophet Inequality with Correlated Arrivals}
\subsection{Problem Formulation} \label{sec:prophetModel}

Throughout this paper, we let $[k]$ denote the set $\{1,2,\ldots,k\}$ for any positive integer $k$.

Consider a finite planning horizon of $T$ periods.
There are $I$ customer types and one unit of a single resource that is managed by some platform. 
In each period $t$, depending on exogenous state information $S_t$ that is observable by time $t$, a customer of type $i$ will arrive with some probability $p_{it}(S_t)$. Upon an arrival of a customer, the platform can either sell the resource to the customer, or irrevocably reject the customer.  The reward earned by the platform for selling the resource to a customer of type $i \in [I]$ in period $t \in [T]$ is $r_{it}\geq 0$.  The goal of the platform is to maximize the expected total reward collected over the planning horizon.

Unlike existing research assuming independent or stationary arrival distributions, we allow the sequence of arrival probabilities $(p_{1t}(S_t), p_{2t}(S_t),\ldots, p_{It}(S_t))_{t=1,\ldots,T}$ to be a correlated stochastic process, which depends on the sample path $(S_1, S_2,\ldots,S_T)$ that is realized.  We assume that we know the joint distribution of this stochastic process of arrival probabilities, that is, the distribution of $\{S_t\}$, and the distribution of arrivals conditional upon $\{S_t\}$.  As a simple example, $S_t$ may represent the weather history at times $\{1,\ldots, t\}$. Our model would capture any correlation in the weather forecast, and assumes that the customer arrival probabilities $p_{it}(S_t)$ in each period $t$ are determined by the weather history $S_t$ up to time $t$.

Let $X_{it} \in \{0,1\}$ be the random variable indicating whether a customer of type $i \in [I]$ will arrive in period $t \in [T]$, in state $S_t$. We must have $\bE[X_{it} | S_t] = p_{it}(S_t)$. We assume the time increments are sufficiently fine so that, similar to many standard models in revenue management \citep{van2005introduction}, we can assume that at most one customer arrives in any given period. More precisely, we apply the common practice in revenue management assuming that $\sum_{i \in [I]} p_{it}(S_t) \leq 1$ almost surely for all $t \in [T]$, and 
\begin{equation}\label{eq:XitDefinition}
X_{it} = \bI\{u_t \in [\sum_{k=1}^{i-1} p_{kt}(S_t), \sum_{k=1}^i p_{kt}(S_t))\},
\end{equation}
where $u_t$ is an independent $[0,1]$ uniform random variable associated with period $t$. As a result, we have $\sum_{i \in [I]} X_{it} \leq 1$ almost surely for all $t \in [T]$. 



%

In each period $t \in [T]$, events take place in the following order:
\begin{enumerate}
\item The platform observes $S_t$, and thus knows $p_{it}(S_t)$ for all $i \in [I]$.
\item The arrivals $(X_{1t}, \ldots, X_{It})$ are realized according to \eqref{eq:XitDefinition}.
\item The platform decides whether to sell the resource to the arriving customer if any.
\end{enumerate}

Let $\cS_t$ denote the support of $S_t$, for all $t \in [T]$. 
\x{Without loss of generality and for ease of notation, we define $S_t$ as the set of external information in all the periods $1,2,\ldots,t$.
As a result, given any $S_t$, the path $(S_1,\ldots, S_t)$ is uniquely determined.
We call a realization of $S_T$ a \emph{sample path}. We assume that we know the joint distribution of $(S_1,\ldots, S_T)$ in the sense that we are able to simulate the sample paths, and able to estimate the expected values of functions of the sample paths.
}

\x{
We can use a tree structure to represent the process $(S_t)_{t \in [T]}$. 
Let $S_0$ be a dummy root node of the tree.
Every realization of $S_1$ is a direct descendant of $S_0$. Recursively, for any tree node that is a realization of $S_t$, its direct descendants are all the different realizations of $S_{t+1}$ conditional on the value of $S_t$.
}

\subsection{Definition of Competitive Ratios}

\x{We will state the prophet inequality with correlated arrivals by proving the \emph{competitive ratio} of an algorithm used by the platform. Specifically,}
define an \emph{optimal offline algorithm} $\OFF$ as an algorithm that knows $(X_{it})_{i \in [I]; t \in [T]}$ at the beginning of period $1$ and makes optimal decisions to sell the resource given this information. \x{By contrast, the platform can use an \emph{online algorithm} to make decisions in each period $t \in [T]$ based on only $S_t$ and $(X_{it'})_{i \in [I]; t' \in [t]}$.}  We use $V^\OFF$ to denote the reward of $\OFF$, and $V^\ON$ the reward of an online algorithm $\ON$. 

\begin{definition}
An online algorithm $\ON$ is \emph{$c$-competitive} if
\[ \bE[V^\ON] \geq c \,\bE[V^\OFF],\]
where the expectation is taken over both $S_T$ and the random arrivals $(X_{it})_{i \in [I]; t \in [T]}$.
\end{definition}

\subsection{Offline Algorithm and Its Upper Bound}

\x{We first show that the expected reward of the offline algorithm can be bounded from above by the expected total reward collected from the entire sample path.}

\begin{proposition} \label{prop:prophetUpperBound}
$\bE[ V^\OFF] \leq \bE[ \sum_{t=1}^T \sum_{i=1}^I r_{it}p_{it}(S_{t})]$.
\end{proposition}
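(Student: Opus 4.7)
The plan is to exploit the fact that the offline algorithm can sell at most one unit of the resource and that rewards are non-negative, so its reward is pointwise bounded by the sum over \emph{all} realized arrivals, not just the chosen one. This converts the maximization over a sample path into a linear expression to which linearity of expectation can be applied.

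First, I would formalize the offline algorithm as a random indicator $Y_{it}^\star \in \{0,1\}$ with $Y_{it}^\star \leq X_{it}$ and $\sum_{i,t} Y_{it}^\star \leq 1$, so that $V^\OFF = \sum_{t=1}^T \sum_{i=1}^I r_{it} Y_{it}^\star$. Because every $r_{it} \geq 0$ and because at most one $Y_{it}^\star$ equals one on any sample path, I would argue the pointwise inequality
\[ V^\OFF \;=\; \sum_{t,i} r_{it} Y_{it}^\star \;\leq\; \sum_{t,i} r_{it} X_{it}. \]

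Next, I would take expectations on both sides and push the expectation inside by linearity, obtaining $\mathbb{E}[V^\OFF] \leq \sum_{t,i} r_{it}\, \mathbb{E}[X_{it}]$. Then, using the tower property conditioning on $S_t$ together with the defining identity $\mathbb{E}[X_{it}\mid S_t] = p_{it}(S_t)$ (which follows from the construction \eqref{eq:XitDefinition}), I would replace $\mathbb{E}[X_{it}]$ with $\mathbb{E}[p_{it}(S_t)]$. Pulling the finite sums back inside the outer expectation by linearity produces the claimed bound $\mathbb{E}[V^\OFF] \leq \mathbb{E}[\sum_{t=1}^T \sum_{i=1}^I r_{it} p_{it}(S_t)]$.

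There is essentially no hard step here; the only subtlety is making sure the pointwise bound $V^\OFF \leq \sum_{t,i} r_{it} X_{it}$ is justified cleanly. This uses both the single-unit capacity (so at most one $Y_{it}^\star$ is nonzero) and the non-negativity of rewards (so dropping the other summands only increases the right-hand side). Once that inequality is in place, the rest is routine application of the tower property to exchange the conditional arrival probabilities $p_{it}(S_t)$ for the raw arrival indicators $X_{it}$.
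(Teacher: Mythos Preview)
Your proposal is correct and follows essentially the same approach as the paper: bound the offline reward pointwise by $\sum_{t,i} r_{it} X_{it}$ (the paper phrases this as ``suppose the resource has infinitely many units''), then take expectations and use $\bE[X_{it}] = \bE[p_{it}(S_t)]$. Your version is simply a more explicit formalization of the same idea.
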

\begin{proof}{Proof.}
Suppose the resource has infinitely many units, so that the platform sells one unit of the resource to every arriving customer. The resulting expected total reward 
\[ \bE[\sum_{i \in [I]} \sum_{t \in [T]} r_{it}X_{it}]  = \sum_{i \in [I]} \sum_{t \in [T]} r_{it}\bE[X_{it}] = \sum_{i \in [I]} \sum_{t \in [T]} r_{it} \bE[ p_{it}(S_t)]  \]
is clearly an upper bound on $\bE[ V^\OFF]$.
\halmos
\end{proof}

\section{Online Algorithm} \label{sec:prophetAlg}

\x{In this section, we propose a simulation-based threshold policy ($\hpolicy$) for the model of prophet inequality with correlated arrivals and prove its performance guarantee. } 

Conditioned on any sample path $S_T$, define 
\[ \cT(S_T):= \sum_{i \in [I]} \sum_{t \in [T]} p_{it}(S_t)\]
as the total expected number of customer arrivals. 

$\hpolicy$ needs to know a uniform upper bound $\bar \cT$ on $\cT(\cdot)$ (i.e., $\bar \cT \geq \cT(S_T)$ with probability one). Given such a $\bar \cT$, $\hpolicy$ computes the threshold 
\begin{equation}\label{eq:hnew}
h(S_t) := \bE\!\!\left[\frac{\sum_{t'=t+1}^T \sum_{i=1}^I r_{it'}p_{it'}(S_{t'})}{1 + \bar \cT -\sum_{t'=1}^t \sum_{i=1}^I p_{it'}(S_{t'})}\ \big|\ S_t\right]
\end{equation}
for deciding whether to sell the resource in period $t$.  Specifically, upon an arrival of a type-$i$ customer in period $t$, if the resource is still available,  $\hpolicy$ sells the resource to the customer if $r_{it} \geq h(S_t)$ and rejects the customer otherwise. Note that $h(\cdot)$ can be computed for each scenario $S_t$ by simulating the sample paths that potentially arise conditional upon $S_t$. In particular, by Proposition \ref{prop:prophetUpperBound} and the definition of $h(\cdot)$, we have \x{(recall that $S_0$ is a (deterministic) dummy variable)}
\begin{equation} \label{eq:hS0}
h(S_0) = \bE\!\!\left[\frac{\sum_{t'=1}^T \sum_{i=1}^I r_{it'}p_{it'}(S_{t'})}{1 + \bar \cT}  \big|\ S_0\right] = \frac{\bE[\sum_{t'=1}^T \sum_{i=1}^I r_{it'}p_{it'}(S_{t'})]}{1 + \bar \cT} \geq \frac{\bE[V^\OFF]}{1 + \bar \cT}.
\end{equation}

Given that $\bE[V^\OFF]$ can be upper-bounded by $h(S_0)$ with a multiplicative factor, the goal of our analysis is to establish a relationship between $h(S_0)$ and the expected reward $\bE[V^\hpolicy]$ of our online algorithm.

%


\subsection{Performance Guarantee}

In this section, we provide two methods for proving the prophet inequality\x{, i.e., the competitive ratio of $\hpolicy$, under correlated arrival probabilities.}  The first method is constructive in the sense that it reasons about the structure of a worst-case problem instance, and exhibits this structure explicitly.  The second method is deductive in that it proves the existence of the bound without shedding light on the worst-case problem instance.

\subsubsection{Constructive method for proving performance bound.}



%

\x{With the constructive method, we focus on proving the competitive ratio of $\hpolicy$ for the case $\bar \cT = 1$. Since all the decisions made by an online algorithm are based on the realized value of $S_1$, we can assume without loss of generality that $S_1$ is deterministic (i.e., the competitive ratio holds when $S_1$ is set to be any of its possible realizations). }

\x{Then we fix the upper bound
\begin{equation}\label{eq:OfflineReward} 
R:= \sum_{t=1}^T \sum_{i=1}^I \bE[r_{it}p_{it}(S_t)|S_1]
\end{equation}  
on $V^\OFF$. }
We will transform problem data progressively, each time making the expected total reward of $\hpolicy$ smaller on this instance.  Then we will show that at some point, the expected total reward of $\hpolicy$ is easily bounded below by a constant.

Conditioned on $S_t$ and the event that the resource has not been sold by the beginning of period $t$, let $V^{\hpolicy}(S_t)$ be the expected reward earned from assigning it to a demand unit during periods from $t$ to $T$. 
We can express $V^{\hpolicy}(S_t)$ explicitly by the following recursion:
\begin{equation}\label{eq:costh}
V^{\hpolicy}(S_t) = \sum_{i=1}^I  p_{it}(S_t)\mathbf{1}(r_{it} \geq h(S_t)) \left(r_{it}-\bE[V^{\hpolicy}(S_{t+1})|S_t]\right) + \bE[V^{\hpolicy}(S_{t+1})|S_t],
\end{equation}
and $V^{\hpolicy}(S_{T+1})=0$.  




We will work with the tree representation of the stochastic process $S_1, S_2,\ldots, S_T$.  
We call a node $S_t$ in the tree a \emph{terminal node} if $p_{it}(S_t)>0$ for some $i$ but $p_{it'}(S_{t'})=0$ for all $i=1,2,...,I$ and all descendants $S_{t'}$ of $S_t$.


We will arrive at our bound by proving two sets of structural results for the worst-case instance of the problem.  The first set of structural results concern the reward process.
\begin{lemma}\label{lem:StructureRewards}
Assume that the given problem instance achieves the worst-case ratio $V^{\hpolicy}/V^{\OFF}$.  Then without loss of generality, 
\begin{enumerate}
\item $\sum_{t'=1}^T \sum_{i=1}^I p_{it'}(S_{t'}) = \bar \cT$ almost surely.
\item The reward is scenario dependent.  That is, demand unit $(i,t)$ has reward $r_{it}(S_t)$ at each scenario $S_t$.  
\item In each scenario $S_t$, there is at most one customer type with positive arrival probability. We thus use $p(S_t)$ and $r(S_t)$ to denote the arrival probability and the reward of that customer type.
\item $r(S_t) = h(S_t)$ or $r(S_t)= (h(S_t))^-$ \x{for all non-terminal nodes $S_t$}. 
\end{enumerate}
\end{lemma}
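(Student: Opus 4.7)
The plan is to show that any instance achieving the worst-case ratio $V^{\hpolicy}/V^{\OFF}$ can be transformed, via a sequence of modifications each of which weakly decreases this ratio, into an instance satisfying all four structural properties. I would argue the four items roughly in the order (1), (3), (2), (4), since the tree-refinement used for (3) is the natural mechanism that also produces the scenario-dependent rewards asserted in (2).

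First, to establish (1), I would pad the process on every sample path with additional ``dummy'' customer types carrying reward zero, placed at the earliest periods where the cumulative arrival probability has not yet exhausted $\bar{\cT}$, until the path total equals $\bar{\cT}$ almost surely. Because a zero-reward customer is never matched by either $\OFF$ or $\hpolicy$, the offline value $V^{\OFF}$ and the rewards actually earned by $\hpolicy$ on genuine customers are unaffected directly. However, placing the dummies early shrinks the denominator $1 + \bar{\cT} - \sum_{t' \le t}\sum_i p_{it'}(S_{t'})$ in \eqref{eq:hnew}, which weakly raises the threshold $h(S_t)$ at later nodes; this can only cause $\hpolicy$ to reject real customers it previously accepted, so $V^{\hpolicy}$ weakly decreases while $V^{\OFF}$ is unchanged, so the ratio weakly worsens.

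Next, for (3), I would refine the tree representation of $(S_t)_{t \in [T]}$ at any node $S_t$ hosting more than one customer type with positive arrival probability. Specifically, I would insert a new layer of nodes between $S_{t-1}$ and $S_t$ whose branches reveal, with matching conditional probabilities $p_{it}(S_t)$, which customer type (if any) arrives. In the refined tree each node carries at most one active customer type, so we can unambiguously write $p(S_t)$ and $r(S_t)$. Since information is only revealed earlier, $V^{\OFF}$ is preserved; and one can verify via the recursion \eqref{eq:costh} that $V^{\hpolicy}$ on the refined tree is weakly smaller. Property (2) then emerges as a byproduct: after refinement, the same ``type $i$'' at ``period $t$'' splits across different scenarios depending on the branch, so the reward may legitimately be treated as $r(S_t)$ without loss of generality.

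Finally, for (4), at every non-terminal node $S_t$ I would either push $r(S_t)$ down to exactly $h(S_t)$ if $r(S_t) > h(S_t)$, or up to just below $h(S_t)$ if $r(S_t) < h(S_t)$. In the first case $\hpolicy$ still accepts the customer, so the reward it earns at $S_t$ drops by exactly $p(S_t)(r(S_t) - h(S_t))$, while the contribution to the upper bound \eqref{eq:OfflineReward} on $V^{\OFF}$ drops by the same amount; a direct comparison of the two decrements shows the ratio weakly worsens. In the second case $\hpolicy$ still rejects, so its reward is unchanged at $S_t$, but the upper bound on $V^{\OFF}$ increases, again weakly worsening the ratio. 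The hard part is that modifying $r(S_t)$ perturbs $h(S_{t'})$ at every ancestor $S_{t'}$ through the numerator in \eqref{eq:hnew}, so decisions at other nodes could shift. To handle this I would process nodes inductively from the leaves back toward the root, showing at each step that the threshold changes and the resulting decision changes are benign in the sense that the ratio still moves in the correct direction; this cascading book-keeping is the principal technical obstacle of the lemma.
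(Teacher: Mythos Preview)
Your outline for items (1)--(3) is roughly aligned with the paper's, though the paper is much less elaborate: it simply appends zero-reward nodes anywhere along each path (no need to place them early, and in fact placing them so that $\hpolicy$'s thresholds at genuine nodes are unaffected is what makes the argument immediate), adds demand types to make rewards scenario-dependent, and splits periods into sub-periods ordered so that rewards are increasing in time, which via the recursion \eqref{eq:costh} weakly lowers $V^{\hpolicy}$ while preserving $R$.

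The substantive gap is in your treatment of item (4). You propose to move $r(S_t)$ toward $h(S_t)$ \emph{without compensating elsewhere}, and then concede that this perturbs the threshold $h(S_{t'})$ at every ancestor $S_{t'}$, calling the resulting cascade ``the principal technical obstacle.'' You do not actually resolve that obstacle, and processing leaves-to-root makes it worse: every leaf modification ripples all the way back to the root, so there is no invariant you can maintain inductively. The paper sidesteps this difficulty entirely with a different idea. It processes nodes from the \emph{root downward} (``highest-level non-terminal node'' first) and, crucially, whenever it changes $r(S_t)$ it simultaneously rescales the rewards $r(S_{t'})$ of \emph{all descendants} of $S_t$ by a common factor so that $\sum_{t' \ge t}\bE[r(S_{t'})p(S_{t'})\mid S_t]$ is held constant. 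This invariant guarantees that $h(S_s)$ and $r(S_s)$ are unchanged at every ancestor $S_s$ of $S_t$, so there is no cascade at all; the only thing that moves is $V^{\hpolicy}(S_t)$, which is shown directly to decrease, and hence $V^{\hpolicy}(S_1)$ decreases while $R$ stays fixed. Your proposal is missing this rebalancing trick, and without it the argument does not close.
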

\proof{Proof.}
The first property is easy to see, since we can always add nodes with $0$ reward and positive arrival probabilities to paths in the tree to ensure that the property holds.  This transformation does not change \x{either $R$ or the outcome of $\hpolicy$.} 

By adding demand types if necessary, we can assume without loss of generality that the reward is scenario dependent.  That is, demand unit $(i,t)$ has reward $r_{it}(S_t)$ at each scenario $S_t$.  


We can split up each period into several periods if necessary, such that each scenario $S_t$ has at most one arrival with probability $p(S_t)$ and reward $r(S_t)$.
This change preserves \x{$R$} 
 and decreases the expected reward for $h$ according to \eqref{eq:costh}, if the rewards are chosen to be increasing with time.

In the tree representation, if there is some highest-level non-terminal node $S_t$ for which $r(S_t) > h(S_t)$ then conditioned on $S_t$, we can decrease $r(S_t)$ and scale up $r(S_{t'})$ by some factor for all scenarios $S_{t'}$ that descend from $S_t$, using the same factor for all $S_{t'}$, such that the value of
\[  \sum_{t'=t}^T \bE[r(S_{t'})p(S_{t'})|S_t] \]
is unchanged. As a result, the equality in \eqref{eq:OfflineReward} is maintained. Do this until $r(S_t)=h(S_t)$. 

We claim that this change decreases  $V^{\hpolicy}(S_t)$, hence $V^{\hpolicy}(S_1)$.   To see the claim, note that according to \eqref{eq:costh}, the change reduces the immediate reward given $S_t$ by some amount $\Delta$ and increases the future reward given $S_t$ by no more than $\Delta$.  Thus the net effect is to reduce $V^{\hpolicy}(S_t)$.    

Also, since the value of $h(S_s)$ and the rewards stay the same for every node $S_s$ preceding $S_t$,  $V^{\hpolicy}(S_1)$, is reduced.

In the tree representation, if there is some highest-level non-terminal node $S_t$ for which $r(S_t) < h(S_t)$ then conditioned on $S_t$, we can increase $r(S_t)$ by a small amount and scale down $r(S_{t'})$ by some factor for all scenarios $S_{t'}$ descending from $S_t$, using the same factor for all $S_{t'}$, such that the equality in \eqref{eq:OfflineReward}  is maintained and all rewards remain non-negative.  Do this until $r(S_t)= (h(S_t))^-$, where $(h(S_t))^-$ denotes a value infinitessimally smaller than $h(S_t)$. It is easy to see that  this change decreases  $V^{\hpolicy}(S_t)$.  Hence $V^{\hpolicy}(S_1)$ is decreased as we argued just above. Repeat the previous transformations until at all non-terminal nodes $S_t$, we have  $r(S_t) = h(S_t)$ or $r(S_t)= (h(S_t))^-$.
\halmos
\endproof

Lemma \ref{lem:StructureRewards} implies that $\cT(S_T) = \bar \cT$ in the worst-case instance. We will assume for the rest of the subsection that our worst-case data has the structure imposed by Lemma \ref{lem:StructureRewards}.  Therefore, for the rest of this subsection, we write the threshold function $h$ in the following alternative way
\begin{equation}\label{eq:h}
h(S_t) = \bE\!\!\left[\frac{\sum_{t'=t+1}^T \sum_{i=1}^I r_{it'}p_{it'}(S_{t'})}{1+\sum_{t'=t+1}^T \sum_{i=1}^I p_{it'}(S_{t'})}\ \big|\ S_t\right].
\end{equation}

Our second set of structural results concern the arrival probabilities.
\begin{lemma}\label{lem:StructureArrivals}  Let $\bar \cT=1$.
Assume that the data achieves the worst-case ratio \x{$V^{\hpolicy}/R$}. 
 Without loss of generality, for $T\geq 2$, the followings hold:
\begin{enumerate}
\item There is a unique path $S_1, S_2, \ldots, S_T$ with positive arrival probabilities. 
\item At every node $S_t$, $t=1,\ldots,T-1$, $r(S_t)=h(S_T)=\bE[r(S_T)p(S_T)|S_t]$;
\item $p(S_t)=0$, $t=2,\ldots,T$;
\item $V^{\hpolicy}(S_1) \geq\frac{R}{2}$.
\end{enumerate}
\end{lemma}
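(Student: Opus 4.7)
The plan is to extend the transformation program from Lemma~\ref{lem:StructureRewards} to further simplify the arrival-probability structure of the worst-case instance, reducing it to an effectively two-period problem for which $V^{\hpolicy}/R \geq 1/2$ follows by a short direct calculation. Throughout, I would preserve $R$ pointwise, which has the crucial side effect of freezing the threshold values $h(S_s)$ at every node in the tree, while weakly decreasing $V^{\hpolicy}(S_1)$.

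For point~1 (unique path), I would proceed top-down. At the highest-level node $S_t$ whose children carry differing downstream expected rewards, I would replace every sibling subtree by a rescaled copy of the single sibling minimizing $\bE[V^{\hpolicy}(S_{t+1})\mid S_t\text{-child}]$, where the rescaling is chosen so that each child's contribution $\bE[\sum_{t'>t} r(S_{t'})p(S_{t'}) \mid S_t]$ to $R$ is preserved. Because the recursion~\eqref{eq:costh} is linear in the next-period conditional value and every ancestor threshold $h(S_s)$ depends on downstream data only through $R$, this substitution leaves all ancestral accept/reject behavior intact while weakly decreasing $V^{\hpolicy}(S_1)$. Iterating collapses the tree to a single path. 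Point~2 then follows as the single-path specialization of $r(S_t)\in\{h(S_t),h(S_t)^-\}$ from Lemma~\ref{lem:StructureRewards} once point~3 is in hand, since all future reward is concentrated at the terminal node and $h(S_t)$ reduces to a multiple of $\bE[r(S_T)p(S_T)\mid S_t]$.

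For point~3, I would push every interior arrival at $1<t<T$ to the terminal node. By Lemma~\ref{lem:StructureRewards}, $r(S_t)$ lies at or just below $h(S_t)$, so $\hpolicy$ either rejects outright or is indifferent at the interior node. Removing $p(S_t)$ and relocating it to $p(S_T)$, together with an appropriate rescaling of $r(S_T)$ that preserves $R$, keeps the accept/reject decisions at every remaining arrival unchanged and weakly decreases $V^{\hpolicy}(S_1)$. Iterating leaves arrivals only at $t=1$ and $t=T$.

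With only $p:=p(S_1)$ and $p(S_T)$ nonzero and $p+\bE[p(S_T)\mid S_1]=1$, point~4 falls out of a direct computation. Writing $r_1:=r(S_1)$ and using the single-path form of $h$, we have $r_1 = h(S_1)^- = \bE[r(S_T)p(S_T)\mid S_1]/(2-p)$, so $\bE[r(S_T)p(S_T)\mid S_1]=r_1(2-p)$ and hence $R=r_1 p + r_1(2-p)=2r_1$. Because $\hpolicy$ rejects at $S_1$ and accepts at $S_T$ (where the threshold is zero), $V^{\hpolicy}(S_1)=\bE[r(S_T)p(S_T)\mid S_1]=r_1(2-p)\geq r_1=R/2$. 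The main obstacle is the sibling-merging step: altering downstream rewards ordinarily perturbs $h(S_s)$ at every ancestor, which could in principle flip accept/reject decisions high in the tree; the pointwise preservation of $R$ during each substitution, which freezes all ancestral thresholds, is what makes the monotone-decrease argument for $V^{\hpolicy}(S_1)$ go through.
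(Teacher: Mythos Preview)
Your overall program—reduce the worst case by direct transformations rather than the paper's induction on $T$—is reasonable, and points~1 and~4 are essentially sound. One small fix in point~1: when you rescale a sibling to preserve its $R^k$, the resulting value at that child becomes $V^{\hpolicy}(C^*)\cdot R^k/R^*$, so for this to be no larger than $V^{\hpolicy}(C_k)$ you must choose $C^*$ to minimize the \emph{ratio} $V^{\hpolicy}(C_k)/R^k$, not $V^{\hpolicy}(C_k)$ itself.

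The real gap is in point~3. Pushing an interior arrival forward to $S_T$ while preserving $R$ \emph{increases} $V^{\hpolicy}(S_1)$, not decreases it. On a single path take $t=T-1$: before the push,
\[
V^{\hpolicy}(S_{T-1})=
\begin{cases}
r(S_T)p(S_T), & \text{reject case},\\
p(S_{T-1})\bigl(r(S_{T-1})-r(S_T)p(S_T)\bigr)+r(S_T)p(S_T), & \text{accept case};
\end{cases}
\]
after the push, $V'^{\hpolicy}(S_{T-1})=r'(S_T)p'(S_T)=r(S_T)p(S_T)+r(S_{T-1})p(S_{T-1})$. The difference is $r(S_{T-1})p(S_{T-1})\ge 0$ in the reject case and $p(S_{T-1})\,r(S_T)p(S_T)\ge 0$ in the accept case. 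Since the ancestral thresholds $h(S_s)$ for $s<T-1$ are indeed preserved, this increase propagates up through~\eqref{eq:costh}. Consequently the bound $V^{\hpolicy}\ge R/2$ you derive in point~4 applies to the \emph{transformed} instance, not the original one. Relatedly, the claim that $\hpolicy$ is ``indifferent'' when $r(S_t)=h(S_t)$ is incorrect: the algorithm accepts there, and accepting can be strictly worse than rejecting, which is exactly why the worst case is nontrivial.

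The paper pushes probability in the \emph{opposite} direction, toward $S_1$. In the base case $T=2$ it scales $p(S_2)$ down by $\alpha$ while scaling $r(S_2)$ up to keep $R_2$ fixed and enlarging $p(S_1)$ to keep $\bar\cT=1$, then shows $\partial V^{\hpolicy}(S_1)/\partial\alpha\ge 0$; the minimum occurs at $\alpha=0$, i.e., $p(S_1)=1$. The general case is handled by induction: the hypothesis collapses the subtree rooted at $S_2$ to this same structure, after which the top-level argument repeats. If you want to salvage the direct-transformation route, the fix is to push interior probability toward $S_1$ (with a simultaneous rescaling that keeps both $R$ and $p(S_1)r(S_1)$ fixed) and verify monotonicity in that direction.
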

\proof{Proof.}
We will prove the theorem by induction on $T$.  First, we prove the result for $T=2$. By the tree reward simplifications, 
$$r(S_1)\approx h(S_1)=\sum_{u \in\mathcal{S}_2} \bP(S_2 =u | S_1) \frac{r(u)p(u)}{1+p(u)} \leq \sum_{u \in\mathcal{S}_2}\bP(S_2 =u | S_1) r(u)p(u) = \bE[V^{\hpolicy}(S_2) | S_1].$$
Therefore $r(S_1)=h(S_1)$ to make $V^{\hpolicy}(S_1)$ as small as possible. Define
\begin{eqnarray*}
c_2&=&\sum_{u \in \mathcal{S}_2} \bP[S_2=u]p(u),\\
R_2&=&\sum_{u \in \mathcal{S}_2} \bP[S_2=u]p(u)r(u),\\
c &=&c_2 + p(S_1).
\end{eqnarray*} 
\x{Notice that by definition of $R$ we have
\[ R = R_2 +p(S_1)r(S_1).\]}

Fix $R$, $R_2$, and $c$.  Consider what happens when we scale down $c_2$ by a factor $\alpha$, scale up $r(u)$ for leaf nodes $u$ to maintain $R_2$ constant, and scale down $r(S_1)$ to maintain $p(S_1)r(S_1)=R-R_2$ constant. We argue that we will reduce $V^{\hpolicy}(S_1)$ while keeping \x{$R$} constant.  Indeed, for $\alpha=1$, 
\begin{eqnarray*}
p(S_1)&=&\bar \cT-\alpha c_2.
\end{eqnarray*}
This implies that
\begin{eqnarray*}
V^{\hpolicy}(S_1) 
&=& p(S_1)r(S_1) + (1-p(S_1))\sum_{u} \bP[S_2=u] p(u)r(u),\\
&=& R-R_2 + (1-\bar \cT+\alpha c_2)R_2.
\end{eqnarray*}
This is where we make $p(S_1)r(S_1)=R-R_2$.
Hence,
\begin{eqnarray*}
\frac{\partial V^{\hpolicy}(S_1)}{\partial \alpha} 
=c_2 R_2\geq 0.
\end{eqnarray*}
Therefore, \x{$V^{\hpolicy}/R$} 
 is minimized when $\alpha=0$, or $p(S_2)=0$ for all $S_2\in\mathcal{S}_2$.  Therefore, the base case is proved. 

Assume the theorem holds for $T-1$.  Fix $c^+ = \sum_{t=2}^T \bP(S_t|S_1)p(S_t)$ and $R^+ = \sum_{t=2}^T \bP(S_t|S_1)p(S_t)r(S_t)$. 
Let the immediate successors of $S_1$ be $S_2^k$, $k=1,\ldots,K$.  
Let $$R^k=\bE[\sum_{t=2}^T r(S_t)p(S_t)|S_2^k],$$ $k=1,\ldots,K$.  
Since the instance that minimizes $V^{\hpolicy}(S_1)$ must minimize $\bE[V^(S_2)|S_1]$ subject to $c^+$ and $R^+$, we have
by the induction hypothesis, that
\begin{eqnarray*}
\bE[V^{\hpolicy}(S_2)|S_1] 
&\geq& \sum_{1}^K \bP(S_2^k)\frac{R^k}{2}\\
&=& \frac{R^+}{2}.
\end{eqnarray*}
This lower bound is attained when $K=1$, $\sum_{t=2}^T \bP(S_t|S^K_2)p(S_t)=1$, $p(S_t)=0$ for all $t=3,\ldots,T$, and $\bP(S^K_2|S_1)=c-p(S_1)$.

By the induction hypothesis, $V^{\hpolicy}(S_2)=\bE[p(S_T)r(S_T)|S_2]=h(S_1)$. 
We also know that $r(S_1) \approx h(S_1)$ by the tree reward simplifications, we conclude that $r(S_1)=h(S_1)$, since the impact on $V^{\hpolicy}(S_1)$ is the same in either case.
Thus, 
\begin{eqnarray*}
V^{\hpolicy}(S_1) 
&=& \bE[p(S_T)r(S_T)|S_1].
\end{eqnarray*}
We know that
\begin{eqnarray*}
R
&=&p(S_T)r(S_T)(\bP(S_T|S_1)+\sum_{s=1}^{T-1}\bP(S_s|S_1)\bP(S_T|S_s)p(S_s))\\
&=&p(S_T)r(S_T)(\bP(S_T|S_1)+\bP(S^K_2|S_1)\bP(S_T|S^K_2)p(S^K_2)+\bP(S_T|S_1)p(S_1))\\
&=&p(S_T)r(S_T)\bP(S_T|S_1)(1+p(S^K_2)+p(S_1)).
\end{eqnarray*}
This implies that 
\begin{eqnarray*}
V^{\hpolicy}(S_1) 
&=&  p(S_T)r(S_T)\bP(S_T|S_1)\\
&=& \frac{R}{1+p(S^K_2)+p(S_1)}\\
&\geq&\frac{R}{2},
\end{eqnarray*}
with the lower bound being realizable when $p(S^K_2)=0$ and $p(S_1)=1$.  By induction, the lemma holds for all $T$.
\halmos
\endproof

Lemmas \ref{lem:StructureRewards} and \ref{lem:StructureArrivals} \x{and Proposition \ref{prop:prophetUpperBound} combine to give us the competitive ratio of $\hpolicy$} directly: 
\x{
\begin{theorem} \label{thm:admission}
For $\bar\cT = 1$, we have
\begin{equation}  \label{eq:ratio}
V^{\hpolicy}(S_1)  \geq  \frac{R}{2} = \frac{1}{2} \sum_{t=1}^T \sum_{i=1}^I \bE[r_{it}p_{it}(S_t)|S_1] \geq \frac{1}{2}\bE[ V^\OFF] .
\end{equation}
\end{theorem}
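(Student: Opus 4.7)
The plan is to observe that Theorem \ref{thm:admission} is essentially a direct assembly of three ingredients already established above: the structural reductions of Lemma \ref{lem:StructureRewards}, the worst-case bound in part 4 of Lemma \ref{lem:StructureArrivals}, and the offline upper bound of Proposition \ref{prop:prophetUpperBound}. So the work is not to prove anything new but to thread these together cleanly.

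First, I would recall the setup leading into the constructive method: since an online algorithm only sees the realized $S_1$, we may assume without loss of generality that $S_1$ is deterministic, and work conditional on this fixed $S_1$. With this convention, $R = \sum_{t=1}^T \sum_{i=1}^I \bE[r_{it} p_{it}(S_t) \mid S_1]$ coincides with the unconditional quantity $\bE[\sum_{t=1}^T \sum_{i=1}^I r_{it} p_{it}(S_t)]$, so Proposition \ref{prop:prophetUpperBound} immediately yields $R \geq \bE[V^{\OFF}]$, giving the rightmost inequality in \eqref{eq:ratio}.

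Second, for the central inequality $V^{\hpolicy}(S_1) \geq R/2$, I would argue by reduction to the worst-case instance. Take any problem instance with $\bar\cT = 1$. By Lemma \ref{lem:StructureRewards}, a finite sequence of reward-preserving transformations (padding paths so $\sum_{t,i} p_{it}(S_t) = \bar\cT$, splitting periods so at most one arrival occurs per scenario, and driving each non-terminal reward to $h(S_t)$ or $(h(S_t))^-$) only decreases $V^{\hpolicy}(S_1)$ while keeping $R$ unchanged. Lemma \ref{lem:StructureArrivals} (part 4) then shows that on any such reduced instance, $V^{\hpolicy}(S_1) \geq R/2$. Hence the same bound must hold a fortiori on the original instance, because the transformations could only shrink the left-hand side.

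Chaining the two inequalities gives $V^{\hpolicy}(S_1) \geq R/2 \geq \bE[V^{\OFF}]/2$, which is exactly \eqref{eq:ratio}. I do not anticipate any real obstacle here since the heavy lifting is already contained in the two lemmas; the only subtle point is making explicit that the worst-case reductions are reductions in the right direction (they weakly decrease $V^{\hpolicy}(S_1)$ while preserving $R$), so that a lower bound on the reduced ratio automatically transfers to a lower bound on the original ratio.
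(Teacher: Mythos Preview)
Your proposal is correct and matches the paper's own proof, which simply states that Lemmas~\ref{lem:StructureRewards} and~\ref{lem:StructureArrivals} together with Proposition~\ref{prop:prophetUpperBound} combine directly to give the bound. If anything, you spell out the logical flow (that the reductions preserve $R$ while only decreasing $V^{\hpolicy}(S_1)$, so the worst-case lower bound transfers back) more explicitly than the paper does.
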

}

\subsubsection{Reductive martingale method for proving performance bound.}
In this section, we provide an alternative, reductive proof for the performance bound of $\hpolicy$ that works for a more general case, when \x{$\bar \cT >0$}.

Define $\tau \in [T+1]$ as the random period in which the resource is sold to a customer under $\hpolicy$. If the resource is not sold at the end of the last period $T$, we set $\tau = T+1$. In this way, $\tau$ is a stopping time bounded from above by $T+1$. We define $p_{i,T+1}(S_{T+1}) = 0$ for all $i \in [I]$.

Define a stochastic process $\{Z(S_t)\}_{t=0,1,\ldots,T+1}$ as 
\[ Z(S_t) = h(S_t) + \sum_{t'=1}^{t} \sum_{i=1}^I  p_{it'}(S_{t'})(r_{it'} - h(S_{t'}))^+.\]

\begin{proposition}\label{prop:ZReward}
$\bE[V^\hpolicy] = \bE[Z(S_\tau)]$.
\end{proposition}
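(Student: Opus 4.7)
The plan is to derive a pathwise identity for $V^\hpolicy$ that separates the threshold $h$ at the stopping time $\tau$ from an ``excess over threshold'' term, and then take expectation by exploiting the conditional independence of $X_{it}$ and the event $\{\tau \ge t\}$ given $S_t$.

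First I would establish the pathwise identity
\begin{equation*}
V^\hpolicy = \sum_{t=1}^T \sum_{i=1}^I X_{it}\mathbf{1}\{r_{it}\ge h(S_t)\}(r_{it}-h(S_t))\,\mathbf{1}\{\tau\ge t\} + h(S_\tau),
\end{equation*}
under the conventions $h(S_{T+1})=0$ and $p_{i,T+1}(S_{T+1})=0$, both of which are consistent with \eqref{eq:hnew}. To verify the identity, I would split on the realized value of $\tau$. On $\{\tau=t^*\le T\}$, the factor $X_{it}\mathbf{1}\{r_{it}\ge h(S_t)\}$ vanishes for $t<t^*$ (no arrival was accepted before $\tau$), and the factor $\mathbf{1}\{\tau\ge t\}$ vanishes for $t>t^*$, so only $t=t^*$ survives in the sum and contributes $r_{I_\tau,\tau}-h(S_\tau)$; adding $h(S_\tau)$ recovers $V^\hpolicy=r_{I_\tau,\tau}$. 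On $\{\tau=T+1\}$ both terms vanish, matching $V^\hpolicy=0$.

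Next I would take expectation term by term. The key observation is that, for each fixed $t$, the event $\{\tau\ge t\}$ is measurable with respect to $(S_{t-1},u_1,\ldots,u_{t-1})$, so by \eqref{eq:XitDefinition} the random variable $X_{it}$ is conditionally independent of $\mathbf{1}\{\tau\ge t\}$ given $S_t$, with $\bE[X_{it}\mid S_t]=p_{it}(S_t)$. Replacing $X_{it}$ by $p_{it}(S_t)$ inside the expectation and using the identity $\mathbf{1}\{r\ge h\}(r-h)=(r-h)^+$ yields
\begin{equation*}
\bE\!\left[\sum_{t=1}^T\sum_{i=1}^I X_{it}\mathbf{1}\{r_{it}\ge h(S_t)\}(r_{it}-h(S_t))\mathbf{1}\{\tau\ge t\}\right] = \bE\!\left[\sum_{t=1}^\tau\sum_{i=1}^I p_{it}(S_t)(r_{it}-h(S_t))^+\right].
\end{equation*}
Combining this with $\bE[h(S_\tau)]$ from the second summand of the pathwise identity, and recognizing the right-hand side as $\bE[Z(S_\tau)]$ by the definition of $Z$, concludes the proof.

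The only delicate step is the conditional-independence argument used in collapsing $X_{it}$ to $p_{it}(S_t)$ inside the expectation; it must be justified carefully from the fact that $X_{it}$ is a deterministic function of $S_t$ and the fresh uniform $u_t$, while $\{\tau\ge t\}$ is a deterministic function of $(S_{t-1},u_1,\ldots,u_{t-1})$. Everything else is routine algebraic bookkeeping.
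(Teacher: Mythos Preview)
Your proof is correct and follows essentially the same approach as the paper's: both arguments decompose the realized reward into $h(S_\tau)$ plus the excess $(r-h)^+$ at the accepted arrival, use that no acceptance occurred before $\tau$ to pass from $\mathbf{1}\{\tau=t\}$ to $\mathbf{1}\{\tau\ge t\}$, and then condition on $(S_t,u_1,\ldots,u_{t-1})$ to replace $X_{it}$ by $p_{it}(S_t)$. Your version is organized a bit more cleanly by first establishing the pathwise identity and only then taking expectations, whereas the paper carries out the same manipulations entirely inside the expectation; the underlying ideas and the one nontrivial step (the conditional-independence replacement) are identical.
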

\begin{proof}{Proof.}
Recall that in any period $t \in [T]$, at most one customer can arrive, i.e., $\sum_{i=1}^I X_{it} \leq 1$ with probability one.

For any $t \in [T]$, conditioned on $\tau = t$, i.e., $\hpolicy$ sells the resource in period $t$, the following two conditions must hold:
\begin{enumerate}
\item Exactly one customer arrives in period $t$, i.e., $\sum_{i=1}^I X_{it} = 1$.
\item The type $i$ of the customer who arrives in period $t$ must satisfy the threshold condition $r_{it} \geq h(S_t)$ (so that $\hpolicy$ sells the resource), or more precisely, $\sum_{i=1}^I X_{it} (r_{it} - h(S_t)) \geq 0$.
\end{enumerate}
Altogether, using the fact that $X_{it}$'s are indicators, we can obtain
\begin{equation}\label{eq:ZRewardProof1}
\sum_{i=1}^I X_{it} (r_{it} -h(S_t)) = \sum_{i=1}^I X_{it} (r_{it} -h(S_t))^+.
\end{equation}

The expected reward of $\hpolicy$ is
\begin{align*}
\bE[V^\hpolicy] = & \bE\!\left[\sum_{t=1}^T \sum_{i=1}^I X_{it} r_{it}  \mathbf{1}(\tau = t)\right]\\
=& \bE\!\left[\sum_{t=1}^T \left(\sum_{i=1}^I X_{it} (r_{it} - h(S_t)) +\sum_{i=1}^I X_{it} h(S_t)\right)  \mathbf{1}(\tau = t)\right]\\
\overset\da=&  \bE\!\left[\sum_{t=1}^T \left(\sum_{i=1}^I X_{it} (r_{it} - h(S_t)) + h(S_t)\right)  \mathbf{1}(\tau = t)\right]\\
\overset\db=&  \bE\!\left[\sum_{t=1}^T \left(\sum_{i=1}^I X_{it} (r_{it} - h(S_t))^+ + h(S_t)\right)  \mathbf{1}(\tau = t)\right]\\
=&  \bE\!\left[\sum_{t=1}^T \sum_{i=1}^I X_{it} (r_{it} - h(S_t))^+  \mathbf{1}(\tau = t)\right] + \bE\left[ \sum_{t=1}^T h(S_t)  \bI(\tau = t)\right]\\
\overset\dc=&  \bE\!\left[\sum_{t=1}^T \sum_{i=1}^I X_{it} (r_{it} - h(S_t))^+  \mathbf{1}(\tau = t)\right] + \bE[h(S_\tau)].
\end{align*}
Above, $\da$ is because $\sum_{i \in [I]} X_{it} = 1$ conditioned on $\tau = t  \in  [T]$; $\db$ is by equation \eqref{eq:ZRewardProof1}; $\dc$ is because the definition of $h(\cdot)$ naturally gives $h(S_{T+1}) = 0$.

If $\tau > t$, i.e., $\hpolicy$ does not sell the resource in period $1,2,\ldots,t$, then any customer who arrives in period $t$ must not satisfy the threshold condition. Thus, conditioned on $\tau > t$, we must have $\sum_{i=1}^I X_{it} (r_{it} - h(S_t))^+=0$. Consequently,
\[ \sum_{i=1}^I X_{it} (r_{it} - h(S_t))^+  \mathbf{1}(\tau > t) = 0\]
\[\Longrightarrow \sum_{i=1}^I X_{it} (r_{it} - h(S_t))^+  \mathbf{1}(\tau = t) = \sum_{i=1}^I X_{it} (r_{it} - h(S_t))^+  \mathbf{1}(\tau \geq t).\]

Then the expected reward can be further written as
\begin{align*}
\bE[V^\hpolicy] =& \bE\!\left[\sum_{t=1}^T \sum_{i=1}^I X_{it} (r_{it} - h(S_t))^+  \mathbf{1}(\tau = t)\right] + \bE[h(S_\tau)]\\
= & \bE\!\left[\sum_{t=1}^T \sum_{i=1}^I X_{it} (r_{it} - h(S_t))^+  \mathbf{1}(\tau \geq t)\right] + \bE[h(S_\tau)]\\
= & \bE\!\left[\sum_{t=1}^T  \bE\!\left[\sum_{i=1}^I X_{it}(r_{it} - h(S_t))^+  \mathbf{1}(\tau \geq t)\big| S_t, \{X_{i't'}\}_{i'=1,2,...,I; t'=1,2,...,t-1}\right] \right] + \bE[h(S_\tau)]\\
= & \bE\!\left[\sum_{t=1}^T  \bE\!\left[\sum_{i=1}^I X_{it}(r_{it} - h(S_t))^+ \big| S_t, \{X_{i't'}\}_{i'=1,2,...,I; t'=1,2,...,t-1}\right]  \mathbf{1}(\tau \geq t)\right] + \bE[h(S_\tau)]\\
& \text{(the event $\tau \geq t$ depends only on the information from periods $1$ to $t-1$)}\\
= & \bE\!\left[\sum_{t=1}^T  \bE\!\left[\sum_{i=1}^I X_{it}(r_{it} - h(S_t))^+ | S_t\right]  \mathbf{1}(\tau \geq t)\right] + \bE[h(S_\tau)].
\end{align*}

Finally, we use $p_{it}(S_t) = \bE[X_{it}|S_t]$ and $p_{i,T+1}(S_{T+1}) = 0$ to obtain
\begin{align*}
\bE[V^\hpolicy]= & \bE\!\left[\sum_{t=1}^T  \bE\!\left[\sum_{i=1}^I X_{it}(r_{it} - h(S_t))^+ | S_t\right]  \mathbf{1}(\tau \geq t)\right] + \bE[h(S_\tau)]\\
= & \bE\!\left[\sum_{t=1}^T \sum_{i=1}^I p_{it}(S_t)(r_{it} - h(S_t))^+  \mathbf{1}(\tau \geq t)\right] + \bE[h(S_\tau)]\\
= & \bE\!\left[\sum_{t=1}^\tau \sum_{i=1}^I p_{it}(S_t)(r_{it} - h(S_t))^+ \right] + \bE[h(S_\tau)]\\
= & \bE[Z(S_\tau)].
\end{align*}
\halmos
\end{proof}

\begin{lemma} \label{lm:ratio}
For any $b\geq 1$, $a \geq 0$, $r_1,...,r_n \geq 0$ and $p_1,...,p_n \geq 0$,
\[ \frac{a + \sum_{i=1}^n p_i r_i}{b + \sum_{i=1}^n p_i} \leq \frac{a}{b}  + \sum_{i=1}^n p_i (r_i - \frac{a}{b})^+.\]
\end{lemma}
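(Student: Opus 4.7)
The plan is to denote $h := a/b$ (so $a = bh$ and $h \geq 0$) and rewrite the desired inequality in a form where the normalization is transparent. Let $P := \sum_{i=1}^n p_i$ and $R := \sum_{i=1}^n p_i r_i$. Subtracting $h$ from both sides of the claimed inequality, a short calculation shows the claim is equivalent to
\[
\frac{R - hP}{b+P} \;\leq\; \sum_{i=1}^n p_i (r_i - h)^+ ,
\]
i.e., $\frac{\sum_i p_i(r_i-h)}{b+P} \leq \sum_i p_i(r_i-h)^+$. This is the inequality I will establish.

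From here I would split into two cases based on the sign of $\sum_i p_i(r_i - h)$. If $\sum_i p_i(r_i-h) \leq 0$, the left-hand side is nonpositive while the right-hand side is nonnegative, and the inequality is immediate. If $\sum_i p_i(r_i-h) \geq 0$, then the left-hand side is a nonnegative quantity divided by $b+P \geq 1$ (using the hypothesis $b \geq 1$ and $P \geq 0$), so the left-hand side is bounded above by $\sum_i p_i(r_i-h)$ itself; and trivially $\sum_i p_i(r_i - h) \leq \sum_i p_i(r_i - h)^+$ since we are only replacing summands by their positive parts. Chaining these two inequalities completes the second case.

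The entire argument is really a one-line manipulation: the only conceptual step is recognizing that one should rewrite the target as a comparison between $\sum_i p_i(r_i-h)$ and $(b+P)\sum_i p_i(r_i-h)^+$ and then exploit $b+P \geq 1$. There is no genuine obstacle; the hypothesis $b \geq 1$ is exactly what is needed to move the denominator $b+P$ out of the way, and the $(\cdot)^+$ on the right-hand side is what handles the case where some summands are negative. I would expect the written-out proof to be three or four lines.
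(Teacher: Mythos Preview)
Your argument is correct. After the substitution $h=a/b$, the reduction to
\[
\frac{\sum_i p_i(r_i-h)}{b+P}\;\le\;\sum_i p_i(r_i-h)^+
\]
is valid, and the two-case split (nonpositive numerator versus $b+P\ge 1$ together with $x\le x^+$) finishes it cleanly. The only hypotheses you actually use are $b\ge 1$ and $p_i\ge 0$; nonnegativity of $a$ and the $r_i$ is not needed in your route, which is a small bonus.

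The paper proceeds differently. It lets $I=\{i:r_i\ge a/b\}$, expands the right-hand side as $\frac{a}{b}+\sum_{i\in I}p_i(r_i-\tfrac{a}{b})$, multiplies and divides by $b+\sum_{i\in I}p_i$, and after simplification obtains
\[
\frac{a+\sum_{i\in I}p_ir_i+(b-1+\sum_{i\in I}p_i)\bigl(\sum_{i\in I}p_i(r_i-\tfrac{a}{b})\bigr)}{b+\sum_{i\in I}p_i}
\;\ge\;\frac{a+\sum_{i\in I}p_ir_i}{b+\sum_{i\in I}p_i}
\;\ge\;\frac{a+\sum_{i=1}^n p_ir_i}{b+\sum_{i=1}^n p_i},
\]
where the first inequality drops a nonnegative term (using $b\ge 1$) and the second is a mediant-type step using $r_j<a/b$ for $j\notin I$. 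Your approach is shorter and avoids the index-set bookkeeping; the paper's approach, by contrast, makes visible the two separate sources of slack (the factor $b-1+\sum_{i\in I}p_i$ and the ``low-reward'' indices $j\notin I$), which can be informative if one later wants to understand when the inequality is tight.
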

\begin{proof}{Proof.}
Let $I \subseteq \{1,2,...,n\}$ be the set such that $r_i \geq a/b$ for all $i \in I$.
\begin{align*}
& \frac{a}{b}  + \sum_{i=1}^n p_i (r_i - \frac{a}{b})^+\\
= & \frac{a}{b}  + \sum_{i \in I} p_i (r_i - \frac{a}{b})\\
= & \frac{\left( \frac{a}{b}  + \sum_{i \in I} p_i (r_i - \frac{a}{b})\right) ( b + \sum_{i \in I} p_i) }{ b + \sum_{i \in I} p_i}\\
= & \frac{ a + \sum_{i \in I} p_i r_i  + (b-1+\sum_{i \in I} p_i)( \sum_{i \in I}p_i (r_i - a/b))}{b + \sum_{i \in I} p_i}\\
\geq & \frac{ a + \sum_{i \in I} p_i r_i }{b + \sum_{i \in I} p_i}\\
\geq & \frac{ a + \sum_{i =1}^n p_i r_i }{b + \sum_{i=1}^n p_i}.
\end{align*}
The last inequality follows from the fact that for any $j \not\in I$,
\[r_j < \frac{a}{b} \leq \frac{ a + \sum_{i \in I} p_i r_i }{b + \sum_{i\in I} p_i}.\]

\halmos
\end{proof}

\begin{proposition} \label{prop:submartingale}
The process $\{Z(S_t)\}_{t\geq 0}$ is a sub-martingale with respect to $S_t$.
\end{proposition}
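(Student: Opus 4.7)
The plan is to verify the submartingale condition $\bE[Z(S_{t+1})\mid S_t] \geq Z(S_t)$ directly. Since the cumulative sum $\sum_{t'=1}^{t}\sum_i p_{it'}(S_{t'})(r_{it'}-h(S_{t'}))^+$ is $S_t$-measurable, it appears on both sides of the desired inequality and cancels. The problem therefore reduces to showing
\[
\bE\!\left[h(S_{t+1}) + \sum_{i=1}^I p_{i,t+1}(S_{t+1})(r_{i,t+1}-h(S_{t+1}))^+ \;\Big|\; S_t\right] \;\geq\; h(S_t).
\]

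The main trick is to pass through the intermediate conditioning on $S_{t+1}$ inside the definition of $h(S_t)$. Let me introduce abbreviations $A_t = \sum_{t'=t+1}^T \sum_i r_{it'}p_{it'}(S_{t'})$ and $B_t = \bar\cT - \sum_{t'=1}^t \sum_i p_{it'}(S_{t'})$, so that $h(S_t) = \bE[A_t/(1+B_t)\mid S_t]$. Splitting $A_t$ as the reward contribution of period $t+1$ plus $A_{t+1}$, and noting that $B_t = B_{t+1} + \sum_i p_{i,t+1}(S_{t+1})$ is $S_{t+1}$-measurable, the tower property yields
\[
h(S_t) = \bE\!\left[\frac{\bE[A_{t+1}\mid S_{t+1}] + \sum_i r_{i,t+1}p_{i,t+1}(S_{t+1})}{(1+B_{t+1}) + \sum_i p_{i,t+1}(S_{t+1})}\;\Big|\; S_t\right].
\]

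Now I would apply Lemma \ref{lm:ratio} pointwise, taking $a = \bE[A_{t+1}\mid S_{t+1}]$, $b = 1 + B_{t+1}$, $p_i = p_{i,t+1}(S_{t+1})$, and $r_i = r_{i,t+1}$. The hypothesis $b \geq 1$ holds because $\bar\cT$ is an almost-sure upper bound on the total expected arrivals, so $B_{t+1} \geq 0$. Since $a/b = h(S_{t+1})$, the lemma gives
\[
\frac{a + \sum_i r_{i,t+1} p_{i,t+1}(S_{t+1})}{b + \sum_i p_{i,t+1}(S_{t+1})} \;\leq\; h(S_{t+1}) + \sum_i p_{i,t+1}(S_{t+1})\bigl(r_{i,t+1}-h(S_{t+1})\bigr)^+.
\]
Taking conditional expectation given $S_t$ on both sides then yields exactly the inequality we needed, completing the proof.

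The step that requires a bit of care (rather than being a serious obstacle) is bookkeeping the algebra so that the numerator/denominator of the ratio defining $h(S_t)$ align with the form $(a+\sum p_i r_i)/(b+\sum p_i)$ demanded by Lemma \ref{lm:ratio}, and verifying $b \geq 1$ from the assumed uniform upper bound $\bar\cT$. Everything else is a routine application of the tower property together with the fact that the past terms in $Z(S_t)$ are adapted.
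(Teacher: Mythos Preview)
Your proposal is correct and follows essentially the same argument as the paper: reduce the submartingale inequality to $h(S_t)\le \bE[h(S_{t+1})+\sum_i p_{i,t+1}(S_{t+1})(r_{i,t+1}-h(S_{t+1}))^+\mid S_t]$, use the tower property to rewrite $h(S_t)$ with an inner conditioning on $S_{t+1}$, and then apply Lemma~\ref{lm:ratio} pointwise with $b=1+B_{t+1}\ge 1$ guaranteed by the uniform bound $\bar\cT$. The only difference is cosmetic (your $A_t,B_t$ shorthand and an index shift $t\to t+1$ versus the paper's $t-1\to t$).
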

\begin{proof}{Proof.}
For any $t \geq 1$, by definition of $Z(S_t)$ and $Z(S_{t-1})$, we can obtain
\begin{align*}
& \bE[Z(S_{t}) | S_{t-1}]\\
= & \bE[h(S_t) + \sum_{t'=1}^{t} \sum_{i=1}^I  p_{it'}(S_{t'})(r_{it'} - h(S_{t'}))^+ | S_{t-1}]\\
=& \bE[h(S_t) - h(S_{t-1}) + \sum_{i=1}^Ip_{it}(S_{t})(r_{it} - h(S_{t}))^+ | S_{t-1}] + Z(S_{t-1}).
\end{align*}
It suffices to prove that in expectation, $ h(S_{t-1})\leq  h(S_t) + \sum_{i=1}^I p_{it}(S_{t})(r_{it} - h(S_{t}))^+ $. We can derive
\begin{align*}
& h(S_{t-1}) \\
= & \bE\!\!\left[\frac{\sum_{t'=t}^T \sum_{i=1}^I r_{it'}p_{it'}(S_{t'})}{1 + \bar \cT -\sum_{t'=1}^{t-1} \sum_{i=1}^I p_{it'}(S_{t'})}\ \big|\ S_{t-1}\right]\\
= & \bE\!\!\left[ \frac{\sum_{t'=t+1}^T \sum_{i=1}^I r_{it'}p_{it'}(S_{t'}) + \sum_{i=1}^I r_{it}p_{it}(S_{t}) }{1 + \bar \cT-\sum_{t'=1}^{t} \sum_{i=1}^I p_{it'}(S_{t'})+ \sum_{i=1}^I p_{it}(S_{t})} \ \big|\ S_{t-1}\right]\\
= & \bE\!\!\left[ \bE\!\!\left[ \frac{\sum_{t'=t+1}^T \sum_{i=1}^I r_{it'}p_{it'}(S_{t'}) + \sum_{i=1}^I r_{it}p_{it}(S_{t}) }{1 + \bar \cT-\sum_{t'=1}^{t} \sum_{i=1}^I p_{it'}(S_{t'})+ \sum_{i=1}^I p_{it}(S_{t})} \big| S_t\right] \ \big|\ S_{t-1}\right]\\
= & \bE\!\!\left[ \frac{\bE\!\!\left[ \sum_{t'=t+1}^T \sum_{i=1}^I r_{it'}p_{it'}(S_{t'}) | S_t \right] + \sum_{i=1}^I r_{it}p_{it}(S_{t}) }{1 + \bar \cT-\sum_{t'=1}^{t} \sum_{i=1}^I p_{it'}(S_{t'})+ \sum_{i=1}^I p_{it}(S_{t})}  \ \big|\ S_{t-1}\right]\\
\leq & \bE\!\!\left[ \frac{\bE\!\!\left[ \sum_{t'=t+1}^T \sum_{i=1}^I r_{it'}p_{it'}(S_{t'}) | S_t \right]}{1 + \bar \cT-\sum_{t'=1}^{t} \sum_{i=1}^I p_{it'}(S_{t'})} + \sum_{i=1}^I p_{it}(S_t)\left(r_{it} -\frac{\bE\!\!\left[ \sum_{t'=t+1}^T \sum_{i=1}^I r_{it'}p_{it'}(S_{t'}) | S_t \right] }{1 + \bar \cT-\sum_{t'=1}^{t} \sum_{i=1}^I p_{it'}(S_{t'})} \right)^+\ \big|\ S_{t-1}\right]\\
= & \bE\!\!\left[ \bE\!\!\left[\frac{ \sum_{t'=t+1}^T \sum_{i=1}^I r_{it'}p_{it'}(S_{t'}) }{1 + \bar \cT-\sum_{t'=1}^{t} \sum_{i=1}^I p_{it'}(S_{t'})}| S_t \right] + \sum_{i=1}^I p_{it}(S_t)\left(r_{it}-\bE\!\!\left[\frac{ \sum_{t'=t+1}^T \sum_{i=1}^I r_{it'}p_{it'}(S_{t'})  }{1 + \bar \cT-\sum_{t'=1}^{t} \sum_{i=1}^I p_{it'}(S_{t'})}| S_t \right] \right)^+\ \big|\ S_{t-1}\right]\\
= & \bE\!\!\left[ h(S_t) + \sum_{i=1}^I p_{it}(S_t)\left(r_{it} -h(S_t) \right)^+\ \big|\ S_{t-1}\right],
\end{align*}
where the inequality follows from Lemma \ref{lm:ratio} and the fact that $\bar \cT$ is an upper bound on the expected total number of arrivals on any sample path: 
\[\bar \cT \geq \sum_{t'=1}^{t} \sum_{i=1}^I p_{it'}(S_{t'}) \Longrightarrow 1 + \bar \cT-\sum_{t'=1}^{t} \sum_{i=1}^I p_{it'}(S_{t'}) \geq 1.\]
\halmos
\end{proof}

With Propositions \ref{prop:ZReward} and \ref{prop:submartingale} established, we apply the optional stopping theorem, to obtain our main result, namely the prophet inequality under correlated arrival probabilities:
\begin{theorem}\label{thm:prophet}
\[ \bE[V^\hpolicy] \geq \bE[h(S_0)] = \frac{\bE[\sum_{t=1}^T \sum_{i=1}^I r_{it}p_{it}(S_{t})]}{1 + \bar \cT} \geq \frac{\bE[V^\OFF]}{1 + \bar \cT}. \]
\end{theorem}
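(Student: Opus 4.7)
The plan is to combine the two propositions already established with the optional stopping theorem, essentially reading off the inequality chain. The key observation is that $\tau$ is a stopping time bounded above by $T+1$, so we are in the simplest regime of optional stopping where no additional integrability hypothesis is needed beyond the existence of $\bE[|Z(S_t)|]$, which is immediate since rewards and arrival probabilities are bounded (the $r_{it}$ are constants and the $p_{it}(\cdot)$ lie in $[0,1]$, so on each sample path $Z$ is a finite sum of bounded non-negative terms).

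First I would record the chain of equalities and inequalities. By Proposition~\ref{prop:ZReward}, $\bE[V^\hpolicy] = \bE[Z(S_\tau)]$. By Proposition~\ref{prop:submartingale}, $\{Z(S_t)\}_{t \geq 0}$ is a submartingale adapted to the filtration generated by $(S_t)$. Since $\tau$ takes values in $\{1,\ldots,T+1\}$ and is therefore a bounded stopping time, the optional stopping theorem for submartingales yields $\bE[Z(S_\tau)] \geq \bE[Z(S_0)]$. By the definition of the process, $Z(S_0) = h(S_0) + 0 = h(S_0)$, since the sum $\sum_{t'=1}^{0}$ is empty and $S_0$ is the dummy root.

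Next I would invoke the identity in equation~\eqref{eq:hS0}, which already packages the computation of $h(S_0)$ together with the upper bound from Proposition~\ref{prop:prophetUpperBound}:
\[
h(S_0) = \frac{\bE\!\left[\sum_{t=1}^T \sum_{i=1}^I r_{it}p_{it}(S_t)\right]}{1 + \bar\cT} \geq \frac{\bE[V^\OFF]}{1 + \bar\cT}.
\]
Stringing these together gives exactly the statement of the theorem.

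There is no real obstacle, since all of the work has been done upstream: Proposition~\ref{prop:ZReward} converted the online reward into the stopped value of $Z$, Proposition~\ref{prop:submartingale} provided the submartingale property (whose proof contained the only substantive computation, namely applying Lemma~\ref{lm:ratio} to the conditional expectation defining $h$), and equation~\eqref{eq:hS0} already relates $h(S_0)$ to the offline bound. The only things to double-check are that $\tau \leq T+1$ almost surely (so that the bounded-stopping-time version of OST applies cleanly without needing uniform integrability) and that $Z(S_0) = h(S_0)$ under the convention used for the empty sum; both are immediate from the definitions.
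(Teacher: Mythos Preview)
Your proposal is correct and essentially identical to the paper's proof: both invoke Proposition~\ref{prop:ZReward} to write $\bE[V^\hpolicy]=\bE[Z(S_\tau)]$, apply the optional stopping theorem using the submartingale property from Proposition~\ref{prop:submartingale} and the boundedness of $\tau$, identify $Z(S_0)=h(S_0)$, and finish with~\eqref{eq:hS0}. If anything, you supply slightly more justification (the bounded-stopping-time remark and the integrability check) than the paper does.
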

\begin{proof}{Proof.}
\[ \bE[V^\hpolicy] = \bE[Z(S_\tau)] \geq \bE[Z(S_0)] =  \bE[h(S_0)] = \frac{\bE[\sum_{t=1}^T \sum_{i=1}^I r_{it}p_{it}(S_{t})]}{1 + \bar \cT} \geq \frac{\bE[V^\OFF]}{1 + \bar \cT},\] 
where the first inequality is by the optimal stopping theorem for sub-martingales, and the last equality is given by \eqref{eq:hS0}.

\halmos
\end{proof}

\section{Application to Two-sided Matching Problems}

In this section, we describe how our results can be applied to design algorithms for a basic matching problem in two-sided markets.  

\subsection{Model}

Again consider a finite planning horizon of $T$ periods.  There are $I$ types of demand units and $J$ types of supply units.  Both demand and supply units arrive randomly over the $T$ periods.  The demand unit of type $i \in [I]$ that arrives at time $t \in [T]$ if any, can be identified using the pair $(i,t)$.  Similarly, the supply unit of type $j \in [J]$ that arrives at time $s \in [T]$ if any, can be identified with the pair $(j,s)$.

Each demand unit $(i,t)$ has a known non-negative reward $r_{ijts}$ when matched with a supply unit $(j,s)$.  This reward can capture how far apart the units are in time and how compatible their respective types are. If types $i$ and $j$ are incompatible, then the reward $r_{ijts}$ could be very small or $0$.  If $i$ and $j$ are compatible, then $r_{ijts}$ can decrease with the length of the interval $[s, t]$ to capture the diminishing value of the match when the supply unit must wait for a long time for the demand unit.  

We assume that supply units can wait but demand units cannot. At the end of each period $t$, after arrivals of demand and supply units have been observed, the demand unit that arrives in period $t$, if any, must be matched immediately to an existing supply unit or rejected. 
Note that if a supply unit $(j,s)$ can only wait a finite amount of time, then we can require that $r_{ijts}=0$ for any $t$ that is sufficiently large compared to $s$.

Let $\Lambda_{it}\in\{0,1\}$ be a random indicator of whether demand unit $(i,t)$ arrives, and $M_{js}\in\{0,1\}$ a random indicator of whether supply unit $(j,s)$ arrives.  We assume that all the random indicators $\Lambda_{it}$, $\forall i \in [I], t \in [T]$, and $M_{js}$, $\forall j \in [J], s \in [T]$, are mutually independent. 

The arrival probabilities $\lambda_{it} := \bE[\Lambda_{it}]$ and $\mu_{js} := \bE[M_{js}]$ are deterministic and known to the platform a priori. To avoid trivialities in the analysis, we assume all the $\lambda_{it}$ and $\mu_{js}$ are strictly positive, but their values can be arbitrarily small.

Thus, regardless of the availability of supply units, the demand arrival processes are \emph{not} correlated a priori. However, for each particular supply unit, the best demand unit that should be assigned to it must depend on the availability of other supply units.  When there are many supply units, some of them might not even be assigned to any demand unit.  By contrast, when there are very few supply units, each of them can be matched to some demand unit.

We also assume that the time increments are sufficiently fine, so that at most one demand unit and one supply unit arrive in any period $t \in [T]$. More precisely, we require $\sum_{i \in [I]} \lambda_{it} \leq 1$ and $\sum_{j \in [J]} \mu_{js} \leq 1$ for all $ t,s \in [T]$. Also similar to \eqref{eq:XitDefinition}, the arrival events can be defined as
\begin{equation}\label{eq:LambdaMuDefinition}
\Lambda_{it} = \bI\{u_t \in [\sum_{k=1}^{i-1} \lambda_{kt}, \sum_{k=1}^i \lambda_{kt})\}, \quad M_{js} = \bI\{v_s \in [\sum_{k=1}^{j-1} \mu_{ks}, \sum_{k=1}^j \mu_{ks})\},
\end{equation}
where $u_1,\ldots,u_T$ and $v_1,\ldots,v_T$ are mutually independent $[0,1]$ uniform random variables.

In any period $t$, the platform first observes $\Lambda_{1t},\ldots,\Lambda_{It}$ and $M_{1t},\ldots, M_{Jt}$. Then, if there is any arriving demand unit in period $t$, the platform uses an online algorithm to make an assignment decision. In other words, a demand unit can be matched to any supply unit arriving in the same period or earlier.

The objective of the problem is to match demand and supply units in an online manner to maximize the expected total reward earned over the horizon.  We do not allow fractional matchings.  That is, each demand unit must be matched in whole to a supply unit.

\subsection{Offline Algorithm and Its Upper Bound}

An optimal offline algorithm $\OFF$ can see the arrivals of all the demand and supply units $(\Lambda,M)$ at the beginning of period 1. Given $(\Lambda,M)$, The maximum offline reward $V^\OFF(\Lambda,M)$ is equal to the value of the following maximum-weight matching problem.
\begin{align}
\begin{split}\label{eq:LP1}
V^\OFF(\Lambda,M) =& \max_{x_{ijts}(\Lambda,M), i \in [I]; j \in [J]; t,s \in [T]} \quad \sum_{i,j,t,s} x_{ijts}(\Lambda,M) r_{ijts}\\
\text{s.t.} & \sum_{i,t} x_{ijts}(\Lambda,M) \leq M_{js}, \quad \forall j \in [J]; s \in [T],\\
& \sum_{j,s} x_{ijts}(\Lambda,M) \leq \Lambda_{it}, \quad \forall i \in [I]; t \in [T],\\
& x_{ijts}(\Lambda,M) \leq \Lambda_{it}M_{js}, \quad \forall i \in [I]; j \in [J]; t,s \in [T],\\
 &  x_{ijts}(\Lambda,M) = 0, \quad \forall i \in [I]; j \in [J]; t \in [T]; s = t+1,\ldots, T,\\
& x_{ijts}(\Lambda,M) \geq 0, \quad \forall i \in [I]; j \in [J]; t,s \in [T].
\end{split}
\end{align}
In the above LP, the  variable $x_{ijts}(\Lambda,M)$ encapsulates the probability that both demand unit $(i,t)$ and supply unit $(j,s)$ arrive \emph{and} $(i,t)$ is assigned to  $(j,s)$. The fourth constraint requires that a demand unit in period $t$ cannot be matched to any supply unit arriving later than $t$.

The competitive ratio $c$ of an online algorithm $\ON$ is similarly defined as 
\[c = \bE[V^\ON]/\bE[V^\OFF(\Lambda,M)],\] 
where $V^\ON$ is the total reward of the online algorithm, and the expectation is taken over $(\Lambda, M)$.

Note that \eqref{eq:LP1} cannot be solved without a priori access to the realizations of $(\Lambda,M)$. Thus, we are interested in finding an upper bound on the expected optimal offline reward $\bE[V^\OFF(\Lambda,M)]$ when we \emph{do not} have such a priori access.  

The following LP solves for the total probabilities $x_{ijts}$ of having demand unit $(i,t)$ arrived and assigned to $(j,s)$.
\begin{align}
\begin{split}\label{eq:LP3}
\max_{x_{ijts}} & \,\, \sum_{i,j,t,s} x_{ijts} r_{ijts}\\
\mbox{s.t. }& \sum_{i,t} x_{ijts} \leq \mu_{js}, \quad \forall j \in [J]; s \in [T],\\
& \sum_{j,s} x_{ijts} \leq \lambda_{it}, \quad \forall i \in [I]; t \in [T],\\
& x_{ijts} \leq \lambda_{it}\mu_{js}, \quad \forall i \in [I]; j \in [J]; t,s \in [T],\\
& x_{ijts} = 0, \quad \forall i \in [I]; j \in [J]; t \in [T]; s = t+1,\ldots, T,\\
& x_{ijts} \geq 0, \quad \forall i \in [I]; j \in [J]; t,s \in [T].
\end{split}
\end{align}
The constraints above are derived from those of \eqref{eq:LP1}.

\begin{theorem}\label{thm:upperbound}
The optimal objective value of \eqref{eq:LP3} is an upper bound on $\bE[V^\OFF(\Lambda,M)]$.
\end{theorem}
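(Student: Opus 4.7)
The plan is to take expectations over the random arrivals in the offline LP \eqref{eq:LP1} and show that the expected optimal solution is feasible for \eqref{eq:LP3} with the same objective value, which immediately yields the desired upper bound.

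More concretely, for each realization $(\Lambda, M)$, let $x^*_{ijts}(\Lambda,M)$ denote an optimal solution to \eqref{eq:LP1}. Define $\bar x_{ijts} := \bE[x^*_{ijts}(\Lambda,M)]$, where the expectation is over the joint distribution of $(\Lambda, M)$. I will argue that $\bar x$ is feasible for \eqref{eq:LP3}. First, I would take expectations of the supply and demand constraints in \eqref{eq:LP1}: linearity of expectation together with $\bE[M_{js}] = \mu_{js}$ and $\bE[\Lambda_{it}] = \lambda_{it}$ immediately gives the first two constraints of \eqref{eq:LP3}. The fourth (forbidden-match) constraint and non-negativity transfer trivially.

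The only step that needs any care is the third (product) constraint $\bar x_{ijts} \leq \lambda_{it}\mu_{js}$. This is where I use that all the indicators $\Lambda_{it}$ and $M_{js}$ are mutually independent (as stated in the model): $\bar x_{ijts} \leq \bE[\Lambda_{it} M_{js}] = \bE[\Lambda_{it}]\bE[M_{js}] = \lambda_{it}\mu_{js}$. Without the independence assumption, this step would fail, which is the main subtlety to check.

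Finally, I will compare objectives. By linearity of expectation,
\begin{equation*}
\sum_{i,j,t,s} \bar x_{ijts}\, r_{ijts} \;=\; \bE\!\left[\sum_{i,j,t,s} x^*_{ijts}(\Lambda,M)\, r_{ijts}\right] \;=\; \bE[V^\OFF(\Lambda,M)].
\end{equation*}
Since $\bar x$ is feasible for \eqref{eq:LP3}, the optimal value of \eqref{eq:LP3} is at least $\bE[V^\OFF(\Lambda,M)]$, which is exactly the claim. The proof is essentially a one-line averaging argument, and the only conceptual point worth flagging is the use of the independence of $\Lambda$ and $M$ in verifying the bilinear upper-bound constraint.
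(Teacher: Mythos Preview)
Your proof is correct and follows essentially the same averaging argument as the paper: define $\bar x_{ijts} = \bE[x^*_{ijts}(\Lambda,M)]$, take expectations of the constraints of \eqref{eq:LP1} to verify feasibility in \eqref{eq:LP3}, and use linearity of expectation on the objective. Your explicit remark that independence of $\Lambda$ and $M$ is what justifies $\bE[\Lambda_{it}M_{js}] = \lambda_{it}\mu_{js}$ is a nice clarification that the paper leaves implicit.
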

\proof{Proof.}
Let $x^*_{ijts}(\Lambda,M)$ be an optimal solution to LP \eqref{eq:LP1}. Define a solution to LP \eqref{eq:LP3} as
\[ \bar x_{ijts} := \bE[x^*_{ijts}(\Lambda,M)].\]

Since $\sum_{i,t} x^*_{ijts}(\Lambda,M) \leq M_{js}$, $\sum_{j,s} x^*_{ijts}(\Lambda,M) \leq \Lambda_{it}$, and $x^*_{ijts}(\Lambda,M) \leq \Lambda_{it}M_{js}$ are required in \eqref{eq:LP1}, we must have 
\begin{align*}
&\sum_{i,t} \bar x_{ijts} =\sum_{i,t} \bE[x^*_{ijts}(\Lambda,M)] \leq \bE[M_{js}] = \mu_{js},\\
&\sum_{j,s} \bar x_{ijts} =\sum_{j,s} \bE[x^*_{ijts}(\Lambda,M)] \leq \bE[\Lambda_{it}] = \lambda_{it},\\
&\bar x_{ijts} = \bE[x^*_{ijts}(\Lambda,M)] \leq \bE[\Lambda_{it}M_{js}] = \lambda_{it} \mu_{js}.
\end{align*}

Also, $x^*_{ijts}(\Lambda,M) = 0$ for all $s > t$ implies $\bar x_{ijts} = 0$ for all $s > t$.

Thus, $\bar x_{ijts}$ is a feasible solution to LP (\ref{eq:LP3}). It follows that the optimal value of LP (\ref{eq:LP3}) is an upper bound on
\[ \sum_{i,j,t,s} \bar x_{ijts} r_{ijts} =  \sum_{i,j,t,s} \bE[x^*_{ijts}(\Lambda,M)] r_{ijts}  = \bE[V^\OFF(\Lambda,M)]. \]

\halmos
\endproof

\subsection{Online Algorithm for Two-Sided Matching}
In this section, we describe and analyze a matching algorithm.  

The algorithm is composed of two simpler sub-routines, a Separation Subroutine and an Admission Subroutine.
The Separation Subroutine randomly samples a supply unit for each incoming demand unit. This sampling splits the demand arrivals into separate arrival streams, each coming to a separate supply unit.  Subsequently, for each supply unit independently, the Admission Subroutine uses the algorithm in Section \ref{sec:prophetAlg} to control the matching of at most one among all incoming demand units to it.  

Let $\mathcal{S}_t := \{0,1\}^{J\times t}$. Define $S_t \in \mathcal{S}_t$ as the information set that records the arrivals of supply units up to period $t$. That is, 
\[S_t= (\{M_{j1}\}_{j=1,2,...,J}, \{M_{j2}\}_{j=1,2,...,J},..., \{M_{jt}\}_{j=1,2,...,J}).\]
For convenience, let $S_0$ be a dummy constant. In our analysis, $S_T = M$ is the \emph{sample path} of scenarios defined in Section \ref{sec:prophetModel}.

The matching algorithm first needs to compute an optimal solution $x^*$ to LP \eqref{eq:LP3}. Then, the Separation Subroutine calculates a probability
\begin{equation}\label{eq:pijts}
p_{ijts}(S_t) :=  \frac{ \min(\lambda_{it}, \sum_{j'=1}^J \sum_{s'=1}^t M_{j's'}\frac{x^*_{ij'ts'}}{\mu_{j's'}}    )}{ \sum_{j'=1}^J \sum_{s'=1}^t M_{j's'}\frac{x^*_{ij'ts'}}{\mu_{j's'}} } \cdot M_{js}  \frac{x^*_{ijts}}{\mu_{js}}
\end{equation}
of choosing $(j,s)$ as a candidate supply unit to be matched to $(i,t)$. 
Note that if $s > t$, then we must have $x^*_{ijts} = 0$ (see LP \eqref{eq:LP3}) and thus $p_{ijts}(S_t) = 0$. That is, our algorithm never tries to match a demand unit in period $t$ to a supply unit arriving later than $t$.

When applying the prophet inequality theory developed in previous sections, we will fix a supply unit $(j,s)$, and think of $(p_{ijts}(S_t))_{i,t}$ as the probabilities that demand units ``arrive'' at $(j,s)$. We first establish some important properties regarding the arrival probabilities $p_{ijts}(\cdot)$.

\begin{proposition}\label{prop:pijts}
\begin{enumerate}
\item[]
\item[1.] $\sum_{i=1}^I \sum_{t=1}^T p_{ijts}(S_t) \leq 1$, for all $j \in [J]$ and $s \in [T]$.
\item[2.] $\sum_{j=1}^J\sum_{s=1}^t p_{ijts}(S_t) \leq \lambda_{it}$, for all $i \in [I]$ and $t \in [T]$.
\end{enumerate}
\end{proposition}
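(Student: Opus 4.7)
The plan is to prove both inequalities by direct algebraic manipulation of the definition in \eqref{eq:pijts}, relying only on the LP constraints in \eqref{eq:LP3} and the structure of the normalization factor. Let me abbreviate the inner sum as
\[ A_{it}(S_t) := \sum_{j'=1}^J \sum_{s'=1}^t M_{j's'} \frac{x^*_{ij'ts'}}{\mu_{j's'}}, \]
and note two preliminary facts: (a) if $s > t$, then $x^*_{ijts} = 0$ by \eqref{eq:LP3}, so $p_{ijts}(S_t) = 0$; (b) in the degenerate case $A_{it}(S_t) = 0$, every summand $M_{js}x^*_{ijts}/\mu_{js}$ is also zero, so we can consistently set $p_{ijts}(S_t) = 0$ by the convention $0/0 := 0$.

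For Part 1, the strategy is to bound the leading factor in \eqref{eq:pijts} by $1$ and then invoke the supply-capacity constraint. Fixing $j,s$, observe that $\min(\lambda_{it}, A_{it}(S_t))/A_{it}(S_t) \le 1$ and $M_{js}\le 1$, so
\[ p_{ijts}(S_t) \;\le\; \frac{x^*_{ijts}}{\mu_{js}}. \]
Summing over $i,t$ and applying the LP constraint $\sum_{i,t} x^*_{ijts} \le \mu_{js}$ yields the bound of $1$.

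For Part 2, the key observation is that the sum over $(j,s)$ of the last factor $M_{js} x^*_{ijts}/\mu_{js}$ (restricted to $s\le t$) is exactly $A_{it}(S_t)$, so it cancels with the denominator of the first factor. Concretely, fixing $i,t$,
\[ \sum_{j=1}^J \sum_{s=1}^t p_{ijts}(S_t) \;=\; \frac{\min(\lambda_{it}, A_{it}(S_t))}{A_{it}(S_t)} \cdot A_{it}(S_t) \;=\; \min(\lambda_{it}, A_{it}(S_t)) \;\le\; \lambda_{it}. \]

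There is no real obstacle in this proof; it is essentially a calculation. The only subtle point is recognizing that the normalization factor $\min(\lambda_{it}, A_{it}(S_t))/A_{it}(S_t)$ in \eqref{eq:pijts} was engineered precisely so that Part 2 holds as an identity capped at $\lambda_{it}$, while the same factor being bounded by $1$ effortlessly gives Part 1 from the supply LP constraint. The degenerate case $A_{it}(S_t)=0$ is handled uniformly by the $0/0:=0$ convention.
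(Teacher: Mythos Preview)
Your proof is correct and follows essentially the same approach as the paper. For Part~1 the paper also bounds the leading ratio by $1$ and applies the supply constraint $\sum_{i,t} x^*_{ijts}\le \mu_{js}$ (it retains the factor $M_{js}$ to get the slightly sharper bound $\le M_{js}\le 1$, but this is immaterial); for Part~2 the paper performs exactly your cancellation of $A_{it}(S_t)$ against the denominator to obtain $\min(\lambda_{it},A_{it}(S_t))\le\lambda_{it}$.
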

\begin{proof}{Proof.}
\begin{align*}
\sum_{i=1}^I \sum_{t=1}^T p_{ijts}(S_t) & = \sum_{i=1}^I \sum_{t=1}^T \frac{ \min(\lambda_{it}, \sum_{j'=1}^J \sum_{s'=1}^t M_{j's'}\frac{x^*_{ij'ts'}}{\mu_{j's'}}    )}{ \sum_{j'=1}^J \sum_{s'=1}^t M_{j's'}\frac{x^*_{ij'ts'}}{\mu_{j's'}} } \cdot M_{js}  \frac{x^*_{ijts}}{\mu_{js}}\\
& \leq M_{js} \sum_{i=1}^I \sum_{t=1}^T \frac{x^*_{ijts}}{\mu_{js}}\\
& \leq M_{js}\\
& \leq 1,
\end{align*}
where the second inequality is given by the first constraint of LP (\ref{eq:LP3}).

We can then derive
\begin{align*}
& \sum_{j=1}^J\sum_{s=1}^t p_{ijts}(S_t) \\
= & \sum_{j=1}^J\sum_{s=1}^t \frac{ \min(\lambda_{it}, \sum_{j'=1}^J \sum_{s'=1}^t M_{j's'}\frac{x^*_{ij'ts'}}{\mu_{j's'}}    )}{ \sum_{j'=1}^J \sum_{s'=1}^t M_{j's'}\frac{x^*_{ij'ts'}}{\mu_{j's'}} } \cdot M_{js}  \frac{x^*_{ijts}}{\mu_{js}} \\
= & \frac{ \min(\lambda_{it}, \sum_{j'=1}^J \sum_{s'=1}^t M_{j's'}\frac{x^*_{ij'ts'}}{\mu_{j's'}}    )}{ \sum_{j'=1}^J \sum_{s'=1}^t M_{j's'}\frac{x^*_{ij'ts'}}{\mu_{j's'}} } \cdot  \sum_{j=1}^J\sum_{s=1}^t M_{js}  \frac{x^*_{ijts}}{\mu_{js}}\\
= &\min(\lambda_{it}, \sum_{j'=1}^J \sum_{s'=1}^t M_{j's'}\frac{x^*_{ij'ts'}}{\mu_{j's'}}    )\\
\leq & \lambda_{it}.
\end{align*}

\halmos
\end{proof}

\noindent {\bf Online Matching Algorithm:}
\begin{itemize}
\item (Initialization) Solve \eqref{eq:LP3} for an optimal solution $x^*$.  
\item Upon an arrival of a demand unit $(i,t)$ in period $t$:
\begin{enumerate}
\item (Separation Subroutine) Randomly pick a supply unit $(j,s)$ with probability $p_{ijts}(S_t)/\lambda_{it}$, for all $j \in [J]$, $s \in [t]$ (recall that we assume $\lambda_{it}$ to be strictly positive). Notice that by definition of $p_{ijts}(\cdot)$, only those supply units that have arrived (i.e., satisfy $M_{js}=1$ and $s \leq t$) can have a positive probability to be picked. 

Also, since Proposition \ref{prop:pijts} gives $\sum_{j,s}  p_{ijts}(S_t) / \lambda_{it} \leq 1$, if the inequality is strict (i.e., $\sum_{j,s} p_{ijts}(S_t) / \lambda_{it} < 1$), then it is possible that no supply unit is picked. In such a case, reject the demand unit directly.

\item (Admission Subroutine) Let $X_{ijts}$ be the indicator of whether demand unit $(i,t)$ arrives \emph{and} the Separation Routine picks supply unit $(j,s)$. We have
\[ \bE[ X_{ijts} | S_t] = \bP(\Lambda_{it}=1) \cdot p_{ijts}(S_t)/\lambda_{it} = \lambda_{it} \cdot p_{ijts}(S_t)/\lambda_{it} = p_{ijts}(S_t).\]

For the supply unit $(j,s)$ picked by the Separation Subroutine (i.e., $X_{ijts}=1$), we apply algorithm $\hpolicy$ by viewing $(p_{ijts}(S_t))_{i,t}$ as the sequence of correlated arrival probabilities. Specifically, match $(i,t)$ to $(j,s)$ if $(j,s)$ is still available and $r_{ijts} \geq h_{js}(S_t)$, where
\[ h_{js}(S_t) := \bE\!\!\left[\frac{\sum_{t'=t+1}^T \sum_{i=1}^I r_{ijt's}p_{ijt's}(S_{t'})}{2 -\sum_{t'=1}^t \sum_{i=1}^I p_{ijt's}(S_{t'})}\ \big|\ S_t\right].\]
Notice that we have chosen $\bar \cT = 1$ (see \eqref{eq:hnew}) for this two-sided online matching problem. This is because the first property of Proposition \ref{prop:pijts} guarantees $\sum_{i=1}^I \sum_{t=1}^T p_{ijts}(S_t) \leq 1 = \bar \cT$.
\end{enumerate}
\end{itemize}

\subsection{Performance of the Online Algorithm}

We first establish an approximation bound for the Separation Subroutine, which relates the arrival probabilities $p_{ijts}(\cdot)$ to the LP upper bound \eqref{eq:LP3}.
We start with a technical lemma.
\begin{lemma}\label{lm:routinglm1}
For any $\lambda > 0$ and $x > 0$,
\[ \frac{\min(\lambda,x)}{x} \geq 1 - \frac{1}{4\lambda} x.\]
\end{lemma}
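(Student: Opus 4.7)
The plan is to handle the two cases $x \le \lambda$ and $x > \lambda$ separately, since the definition of $\min(\lambda,x)$ splits along this boundary.

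First I would dispatch the easy case $x \le \lambda$. Here $\min(\lambda,x)/x = 1$, and the desired inequality reduces to $1 \ge 1 - x/(4\lambda)$, i.e.\ $x/(4\lambda) \ge 0$, which is immediate from $x,\lambda > 0$.

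The substantive case is $x > \lambda$. Here $\min(\lambda,x)/x = \lambda/x$, so the claim becomes
\[ \frac{\lambda}{x} \;\ge\; 1 - \frac{x}{4\lambda}, \qquad \text{or equivalently} \qquad \frac{\lambda}{x} + \frac{x}{4\lambda} \;\ge\; 1. \]
The natural tool is the AM--GM inequality applied to the two positive quantities $\lambda/x$ and $x/(4\lambda)$: their product is the constant $1/4$, so
\[ \frac{\lambda}{x} + \frac{x}{4\lambda} \;\ge\; 2\sqrt{\frac{\lambda}{x}\cdot \frac{x}{4\lambda}} \;=\; 2\cdot \frac{1}{2} \;=\; 1, \]
which closes the case. (Equivalently, one could clear denominators and recognize the inequality $4\lambda^2 - 4\lambda x + x^2 = (2\lambda - x)^2 \ge 0$, which is perhaps a cleaner way to present it without invoking AM--GM.)

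There is no real obstacle here; the only thing to watch is that the constant $1/4$ in the lemma is exactly what makes the AM--GM bound tight (or, in the quadratic formulation, exactly what produces a perfect square). This explains the choice of constant and also shows that the bound is tight at $x = 2\lambda$, where both sides equal $1/2$. I would therefore write the proof as a short two-case argument, presenting the quadratic identity $(2\lambda - x)^2 \ge 0$ form in the nontrivial case for the cleanest exposition.
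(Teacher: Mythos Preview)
Your proof is correct and essentially identical to the paper's: both split into the cases $x \le \lambda$ and $x > \lambda$, dispatch the first trivially, and in the second apply AM--GM to $\lambda/x$ and $x/(4\lambda)$ to obtain the bound $1$. Your additional observation that this can be rewritten as $(2\lambda - x)^2 \ge 0$, with tightness at $x = 2\lambda$, is a nice alternative presentation that the paper does not include.
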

\begin{proof}{Proof.}
For $x \leq \lambda$, $\min(\lambda, x) / x = 1 \geq 1 - \frac{1}{4\lambda} x$.

For $x > \lambda$, 
\begin{align*}
& \frac{\min(\lambda,x)}{x} - (1 - \frac{1}{4\lambda} x)\\
= & \frac{\lambda}{x} - 1 + \frac{1}{4\lambda} x\\
= & \frac{\lambda}{x}  + \frac{1}{4} \cdot \frac{x}{\lambda} - 1\\
\geq & 2 \cdot \sqrt{\frac{\lambda}{x}} \cdot \frac{1}{2} \sqrt{\frac{x}{\lambda}} - 1\\
\geq & 0.
\end{align*}
\halmos
\end{proof}
Using the above lemma and the definition of $p_{ijts}(\cdot)$, we are ready to prove the proximity of $p_{ijts}(\cdot)$ relative to $x^*$.
\begin{theorem}\label{thm:pijtsBound}
$\bE[p_{ijts}(S_t)] \geq 0.5  x^*_{ijts}$.
\end{theorem}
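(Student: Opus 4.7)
The plan is to apply Lemma \ref{lm:routinglm1} to the ratio $\min(\lambda_{it}, Y)/Y$, where
\[ Y := \sum_{j'=1}^J \sum_{s'=1}^t M_{j's'}\frac{x^*_{ij'ts'}}{\mu_{j's'}}. \]
This lemma gives the pointwise bound $\min(\lambda_{it},Y)/Y \geq 1 - Y/(4\lambda_{it})$, so
\[ p_{ijts}(S_t) \;\geq\; \Bigl(1 - \tfrac{Y}{4\lambda_{it}}\Bigr)\,M_{js}\,\tfrac{x^*_{ijts}}{\mu_{js}}. \]
Taking expectations reduces the theorem to producing a lower bound on $\bE\bigl[(1 - Y/(4\lambda_{it}))M_{js}\bigr]$ of at least $\mu_{js}/2$.

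Next I would compute $\bE[Y M_{js}]$ by splitting off the term $(j',s') = (j,s)$ from $Y$. Write $Y = M_{js}\,x^*_{ijts}/\mu_{js} + Y'$, where $Y'$ is independent of $M_{js}$ because all the arrival indicators $M_{j's'}$ are mutually independent. Using $\bE[M_{js}^2] = \bE[M_{js}] = \mu_{js}$ and $\bE[Y] = \sum_{j',s'\leq t} x^*_{ij'ts'} \leq \lambda_{it}$ (the last inequality is the second constraint of LP~\eqref{eq:LP3}), this yields
\[ \bE[YM_{js}] \;=\; x^*_{ijts}(1-\mu_{js}) + \mu_{js}\bE[Y]. \]

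The key step is to bound the right-hand side by $2\lambda_{it}\mu_{js}$. For this I would use two LP-based inequalities simultaneously: the third constraint of \eqref{eq:LP3} gives $x^*_{ijts} \leq \lambda_{it}\mu_{js}$, so $x^*_{ijts}(1-\mu_{js}) \leq \lambda_{it}\mu_{js}$; and $\bE[Y] \leq \lambda_{it}$ gives $\mu_{js}\bE[Y] \leq \lambda_{it}\mu_{js}$. Adding them produces $\bE[YM_{js}] \leq 2\lambda_{it}\mu_{js}$, and therefore
\[ \bE\Bigl[\Bigl(1 - \tfrac{Y}{4\lambda_{it}}\Bigr) M_{js}\Bigr] \;=\; \mu_{js} - \tfrac{\bE[YM_{js}]}{4\lambda_{it}} \;\geq\; \mu_{js} - \tfrac{\mu_{js}}{2} \;=\; \tfrac{\mu_{js}}{2}. \]
Multiplying by $x^*_{ijts}/\mu_{js}$ closes the argument: $\bE[p_{ijts}(S_t)] \geq \tfrac{1}{2} x^*_{ijts}$.

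The only subtle point I anticipate is recognizing that $M_{js}$ itself appears inside $Y$, so one cannot treat them as independent; this is exactly what forces the $(1-\mu_{js}) + \mu_{js}\bE[Y]/x^*_{ijts}$ splitting and requires using both LP constraints ($x^*_{ijts} \leq \lambda_{it}\mu_{js}$ and $\sum_{j',s'} x^*_{ij'ts'} \leq \lambda_{it}$) at the same time. Everything else is essentially a pointwise application of Lemma \ref{lm:routinglm1} followed by linearity of expectation.
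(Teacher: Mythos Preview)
Your proof is correct and follows essentially the same approach as the paper: apply Lemma~\ref{lm:routinglm1} pointwise, then split off the $(j,s)$ term from the sum using independence of the $M_{j's'}$, and finish by invoking the two LP constraints $\sum_{j',s'} x^*_{ij'ts'} \le \lambda_{it}$ and $x^*_{ijts} \le \lambda_{it}\mu_{js}$. The only cosmetic difference is that you compute $\bE[YM_{js}]$ and then multiply by the constant $x^*_{ijts}/\mu_{js}$, whereas the paper works directly with $\bE[Y\cdot Y_{js}]$; the two calculations are algebraically equivalent (and you should note, as the paper does, that when $Y=0$ the pointwise inequality still holds trivially because then $M_{js}=0$ whenever $x^*_{ijts}>0$).
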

\begin{proof}{Proof.}
Fix any $i,j,t,s$, but $M$ (and thus $S_t$) is random. For any supply unit $(j',s')$, define
\[ Y_{j's'} \equiv M_{j's'}\cdot  x^*_{ij'ts'} / \mu_{j's'}.\]

Note that $\bE[Y_{j's'}] = x^*_{ij'ts'} / \mu_{j's'} \cdot \bP(M_{j's'}=1) = x^*_{ij'ts'} / \mu_{j's'} \cdot \mu_{j's'} = x^*_{ij'ts'}$.

We can then deduce
\begin{align*}
& \bE[p_{ijts}(S_t)]\\
= & \bE[\frac{\min(\lambda_{it}, \sum_{j',s'} Y_{j's'})}{\sum_{j',s'} Y_{j's'}} \cdot Y_{js}]\\
\geq & \bE[ (1 - \frac{1}{4\lambda_{it}} \sum_{j',s'} Y_{j's'}) \cdot Y_{js}] \\
& \,\,\,\,\,\text{(by Lemma \ref{lm:routinglm1}; if $\sum_{j',s'} Y_{j's'} = 0$, we have $Y_{js} = 0$ so the inequality still holds)}\\
= & \bE[1 - \frac{1}{4\lambda_{it}} \sum_{j' \not= j,s'\not= s} Y_{j's'}] \bE[Y_{js}] - \frac{1}{4 \lambda_{it}}\bE[Y_{js}^2] \\
= & (1 - \frac{1}{4\lambda_{it}} \sum_{j'\not= j, s'\not= s}x^*_{ij'ts'})x^*_{ijts} - \frac{1}{4\lambda_{it}}  x^*_{ijts}\cdot x^*_{ijts} / \mu_{js}\\
\geq & (1 - \frac{1}{4})x^*_{ijts} - \frac{1}{4} x^*_{ijts}\\
& \,\,\,\,\,\text{(because the constraints of LP \eqref{eq:LP3} requires $\sum_{j's'}x^*_{ij'ts'} \leq \lambda_{it}$ and $x^*_{ijts} \leq \mu_{js} \lambda_{it}$)}\\
=& \frac{1}{2} x^*_{ijts}.
\end{align*}
\halmos
\end{proof}

Now, we tie together the above approximation bound with the prophet inequality established in Section \ref{sec:prophetAlg}:
\begin{theorem}  
The total reward $V^\ON$ of our matching algorithm satisfies 
\[\bE[V^\ON] \geq \frac{1}{4} \bE[V^\OFF(\Lambda,M)].\]
\end{theorem}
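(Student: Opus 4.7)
The plan is to assemble three earlier results. By Theorem \ref{thm:upperbound}, $\bE[V^\OFF(\Lambda,M)] \leq \sum_{i,j,t,s} x^*_{ijts} r_{ijts}$, so it suffices to show $\bE[V^\ON] \geq \tfrac{1}{4} \sum_{i,j,t,s} x^*_{ijts} r_{ijts}$. The natural decomposition is $V^\ON = \sum_{j,s} V^\ON_{js}$, where $V^\ON_{js}$ is the reward collected through supply unit $(j,s)$. By construction of the Admission Subroutine, $V^\ON_{js}$ is governed by an independent copy of $\hpolicy$ run on the thinned stream $\{X_{ijts}\}_{i,t}$ of demand units routed to $(j,s)$ by the Separation Subroutine, using threshold $h_{js}(\cdot)$ and exogenous state $S_t$.

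Before invoking the prophet inequality I would verify that, for each fixed $(j,s)$, the process $\{X_{ijts}\}_{i,t}$ fits the model of Section \ref{sec:prophetModel} with arrival probabilities $\{p_{ijts}(S_t)\}_{i,t}$. The three required properties are: (a) $\bE[X_{ijts} \mid S_t] = p_{ijts}(S_t)$, which follows from the definition of the Separation Subroutine together with the independence of $\Lambda$ and $M$; (b) $\sum_i X_{ijts} \leq 1$ in every period $t$, since at most one demand arrives and is routed to at most one supply unit; and (c) $\sum_{i,t} p_{ijts}(S_t) \leq 1$ almost surely, which is Part~1 of Proposition~\ref{prop:pijts} and justifies setting $\bar\cT = 1$ in the definition of $h_{js}(\cdot)$.

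With these properties in hand, Theorem \ref{thm:prophet} applied at each $(j,s)$ with $\bar\cT=1$ gives
\[
\bE[V^\ON_{js}] \;\geq\; \frac{1}{1+1}\,\bE\!\left[\sum_{i,t} r_{ijts}\, p_{ijts}(S_t)\right] \;=\; \frac{1}{2} \sum_{i,t} r_{ijts}\, \bE[p_{ijts}(S_t)].
\]
Summing over $(j,s)$, then substituting $\bE[p_{ijts}(S_t)] \geq \tfrac{1}{2} x^*_{ijts}$ from Theorem \ref{thm:pijtsBound}, and finally using Theorem \ref{thm:upperbound}, I obtain the chain
\[
\bE[V^\ON] \;\geq\; \frac{1}{2} \sum_{i,j,t,s} r_{ijts}\,\bE[p_{ijts}(S_t)] \;\geq\; \frac{1}{4} \sum_{i,j,t,s} r_{ijts}\, x^*_{ijts} \;\geq\; \frac{1}{4}\, \bE[V^\OFF(\Lambda,M)].
\]

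The main obstacle is conceptual rather than computational: I must argue that the per-supply-unit analysis is legitimate even though all supplies share the same underlying randomness and a demand routed to one supply unit is unavailable to another. The point is that the reward decomposes additively across $(j,s)$, and that, conditional on $S_t$, the Separation Subroutine performs independent thinning at the rates $p_{ijts}(S_t)$. Therefore the marginal stream actually observed by the Admission copy running at $(j,s)$ satisfies the prophet-model hypotheses regardless of what the algorithm does at other supply units. Once this decomposition is justified, the two factors of $1/2$ -- one from Theorem \ref{thm:pijtsBound} and one from Theorem \ref{thm:prophet} -- multiply to give the claimed $1/4$ ratio.
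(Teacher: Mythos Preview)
Your proposal is correct and follows essentially the same route as the paper: decompose the online reward by supply unit, apply Theorem~\ref{thm:prophet} with $\bar\cT=1$ to each $(j,s)$ using Proposition~\ref{prop:pijts} to justify $\bar\cT=1$, then combine Theorem~\ref{thm:pijtsBound} and Theorem~\ref{thm:upperbound} to obtain the two factors of $1/2$. If anything, your write-up is more careful than the paper's in explicitly checking that the routed stream $\{X_{ijts}\}_{i,t}$ satisfies the hypotheses of the prophet model of Section~\ref{sec:prophetModel} and in articulating why the per-$(j,s)$ analysis is valid despite the shared randomness.
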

\proof{Proof.} 
Fix any $(j,s)$ for $j \in [J]$ and $s \in [T]$. A demand unit $(i,t)$ is matched to $(j,s)$ if and only if $X_{ijts}=1$ and $r_{ijts} \geq h_{js}(S_t)$. This is exactly the single-resource problem presented in Section \ref{sec:prophetModel}. Therefore, by Theorem \ref{thm:prophet}, the expected total reward earned from $(j,s)$ is at least (recall that we choose $\bar \cT = 1$ for this two-sided online matching problem)
\[ \bE[h_{js}(S_0)] = \frac{ \bE[ \sum_{t=1}^T \sum_{i=1}^I r_{ijts}p_{ijts}(S_{t})]}{2}.\]
We then use Theorem \ref{thm:pijtsBound} to obtain
\[ \bE[h_{js}(S_0)] = \frac{ \bE[ \sum_{t=1}^T \sum_{i=1}^I r_{ijts}p_{ijts}(S_{t})]}{2} = \frac{ \sum_{t=1}^T \sum_{i=1}^I r_{ijts}\bE[ p_{ijts}(S_{t})]}{2} \geq \frac{ \sum_{t=1}^T \sum_{i=1}^I r_{ijts} x^*_{ijts}}{4}.\]
Consequently, the total expected reward summed over all supply units is at least
\[ \sum_{j \in [J]} \sum_{s \in [T]} \bE[h_{js}(S_0)] \geq  \sum_{j \in [J]} \sum_{s \in [T]}\frac{ \sum_{t \in [T]} \sum_{i \in [I]} r_{ijts} x^*_{ijts}}{4} \geq \frac{1}{4} \bE[V^\OFF(\Lambda,M)], \]
where the final inequality follows from Theorem \ref{thm:upperbound}.
\halmos
\endproof
Note that the ratio of $1/4$ above results from a loss of a factor of $1/2$ from the solution of Prophet Inequalities, and another factor of $1/2$ from the tractable approximation to the upper-bound deterministic LP. \x{Since $1/2$ is an upper bound on the competitive ratio for the standard prophet inequality (which is a special case of our prophet inequality with correlated arrival probabilities), any improvement to the bound of the online algorithm 
must come from refining the solution to the LP.} 

\section{Numerical Studies}
\label{sec:numerical}

In this section, we conduct numerical experiments to explore the performance of our algorithms. We model our experiments on applications that match employers with freelancers for short-term projects, such as web design, art painting, and data entry. 

We assume there are $30$ employer (demand) types and $30$ worker (supply) types. We set the reward $r_{ijts}$ according to the formula
\[ r_{ijts} = s_{ij} \cdot f_{ts} \cdot g_{ij},\]
where we use $s_{ij}, f_{ts}$ and $g_{ij}$ to capture three different aspects of a matching:
\begin{itemize}
\item \textbf{Ability to accomplish tasks.} $s_{ij}$ represents the ability of workers of type $j$ to work for employers of type $i$. We randomly draw $s_{ij}$ from a normal distribution $\cal{N}(0,1)$ for each pair $(i,j)$. 
In particular, if $s_{ij} < 0$, the reward of the matching will be negative, and thus no algorithm will ever match worker type $j$ to employer type $i$.
\item \textbf{Idle time of workers.} It may be wise to limit the total time that a worker is idle in the system before being assigned a job. Thus, we set
\[ f_{ts} = 1-\alpha + \alpha e^{-(t-s)/\tau}\]
so that the reward of a matching is discounted by $\alpha$ when the idle time of the worker exceeds $\tau$.
\item \textbf{Geographical distance.} Certain freelance jobs may require a short commute distance between workers and employers. For demonstration purpose, we assume that each worker type and employer type is associated with a random zip code in Manhattan, with probability proportional to the total population in the zip code zone. Let $d(i,j)$ be the Manhattan distance between the centers of zip code zones of worker type $j$ and employer type $i$. We assume that
\[ g_{ij} = 1 - \beta + \beta e^{-d(i,j) / \omega},\]
so the reward is discounted by $\beta$ when the commute distance exceeds $\omega$. 
\end{itemize}


We consider a horizon of 60 periods. Depending on the application, one period may correspond to a day or a 10-minute span. In any period, a random number of workers may sign in to be ready to provide service. The type of a worker is uniformly drawn from all the worker types. Let $\mu(t)$ be the rate at which workers appear in the system. Similarly, when an employer arrives, the type of the employer is uniformly drawn from all the employer types. Let $\lambda(t)$ be the arrival rate of employers.

We randomly generate multiple test scenarios. In each scenario, we independently draw $\mu(t)$ and $\lambda(t)$, for every period $t$, from a uniform distribution over $[0,1]$. Given the rates $\mu(t)$ and $\lambda(t)$, we further vary other model parameters by first choosing the base case to be $\alpha = 0.5$, $\tau = 10 \text{ (periods)}$, $\beta = 0.5$, $\omega = 0.05^\circ$, and then each time varying one of these parameters.

We test the following algorithms:
\begin{itemize} 
\item ($\ON$) Our online algorithm without resource sharing.


\item ($\ON_+^1$) A variant of our online algorithm with resource sharing. When $\ON_+^1$ rejects a customer in the admission subroutine, $\ON_+^1$ offers another resource with the largest non-negative margin
\[ \bE\left[ \frac{\sum_{t'=t+1}^T \sum_{i=1}^I r_{it'}p_{it'}(S_{t'})}{2 - \sum_{t'=1}^t \sum_{i=1}^I p_{it'}(S_{t'})}  \big|\ S_t\right] - r_{ijts}.\]

\item ($\ON_+^2$) A variant of our online algorithm with resource sharing. When $\ON_+^2$ rejects a customer in the admission subroutine, $\ON_+^2$ offers another resource with the largest non-negative margin
\[ 130\% \times \bE\left[ \frac{\sum_{t'=t+1}^T \sum_{i=1}^I r_{it'}p_{it'}(S_{t'})}{2 - \sum_{t'=1}^t \sum_{i=1}^I p_{it'}(S_{t'})}  \big|\ S_t\right] - r_{ijts}.\]

\item ($\ON_+^3$) A variant of our online algorithm with resource sharing. When $\ON_+^3$ rejects a customer in the admission subroutine, $\ON_+^3$ offers another resource with the largest non-negative margin
\[ 160\% \times\bE\left[ \frac{\sum_{t'=t+1}^T \sum_{i=1}^I r_{it'}p_{it'}(S_{t'})}{2 - \sum_{t'=1}^t \sum_{i=1}^I p_{it'}(S_{t'})}  \big|\ S_t\right] - r_{ijts}.\]

\item ($\ON_+^4$) A variant of our online algorithm with resource sharing. When $\ON_+^4$ rejects a customer in the admission subroutine, $\ON_+^4$ offers another resource with the largest non-negative margin
\[ 200\% \times \bE\left[ \frac{\sum_{t'=t+1}^T \sum_{i=1}^I r_{it'}p_{it'}(S_{t'})}{2 - \sum_{t'=1}^t \sum_{i=1}^I p_{it'}(S_{t'})}  \big|\ S_t\right] - r_{ijts}.\]

\item A greedy algorithm that always offers a resource with the highest reward.
\item A bid-price heuristic based on the optimal dual prices of LP (\ref{eq:LP1})
\end{itemize}

We report numerical results in Tables \ref{tab:simulation1} to \ref{tab:simulation4}, where the performance of each algorithm is simulated using 1000 replicates. For our online algorithms, in each period we compute the threshold $h_{js}(S_t)$ by simulating 100 future sample paths. We find that, despite the $1/4$ provable ratio, the algorithm $\ON$ captures about half of the offline expected reward, and the improved algorithms $\ON_+^1$ and $\ON_+^2$ capture $65\%$ to $70\%$ of the offline expected reward. Moreover, the improved algorithms outperform the greedy and the bid-price heuristics in all scenarios. These results demonstrate the advantage of using our online algorithms as they have not only optimized performance in the worst-case scenario, but satisfactory performance on average as well.

\begin{table}
\begin{center}
\caption{Scenario 1. Performance of different algorithms relative to LP (\ref{eq:LP1}).}
\label{tab:simulation1}
\begin{tabular}{|c|c|c|c|c|c|c|c|}
\hline
& $\ON$ & Greedy & BPH & $\ON_+^1$ & $\ON_+^2$ & $\ON_+^3$ & $\ON_+^4$\\
\hline
	Base	&$	49.8\%	$&$	62.2\%	$&$	63.7\%	$&$	66.1\%	$&$	67.9\%	$&$	67.9\%	$&$	67.5\%	$\\
$	\alpha = 0	$&$	48.0\%	$&$	56.9\%	$&$	63.4\%	$&$	65.3\%	$&$	67.6\%	$&$	67.9\%	$&$	67.4\%	$\\
$	\alpha = 0.2	$&$	48.6\%	$&$	58.8\%	$&$	64.0\%	$&$	65.7\%	$&$	67.8\%	$&$	67.9\%	$&$	67.5\%	$\\
$	\alpha = 0.8	$&$	51.8\%	$&$	64.9\%	$&$	62.9\%	$&$	66.1\%	$&$	67.6\%	$&$	67.6\%	$&$	67.2\%	$\\
$	\alpha = 1	$&$	53.4\%	$&$	65.3\%	$&$	64.0\%	$&$	65.7\%	$&$	66.9\%	$&$	66.8\%	$&$	66.4\%	$\\
$	\tau = 2	$&$	50.3\%	$&$	62.8\%	$&$	65.7\%	$&$	66.9\%	$&$	68.8\%	$&$	69.0\%	$&$	68.3\%	$\\
$	\tau = 5	$&$	50.3\%	$&$	63.1\%	$&$	64.8\%	$&$	66.6\%	$&$	68.2\%	$&$	68.5\%	$&$	67.9\%	$\\
$	\tau = 20	$&$	49.3\%	$&$	60.8\%	$&$	63.2\%	$&$	65.8\%	$&$	67.5\%	$&$	67.8\%	$&$	67.4\%	$\\
$	\tau = 30	$&$	49.1\%	$&$	60.0\%	$&$	63.2\%	$&$	65.6\%	$&$	67.6\%	$&$	67.7\%	$&$	67.1\%	$\\
$	\beta = 0	$&$	49.9\%	$&$	62.6\%	$&$	63.6\%	$&$	66.1\%	$&$	68.0\%	$&$	68.1\%	$&$	67.5\%	$\\
$	\beta = 0.2	$&$	49.9\%	$&$	62.6\%	$&$	63.7\%	$&$	66.1\%	$&$	68.0\%	$&$	67.9\%	$&$	67.7\%	$\\
$	\beta = 0.8	$&$	49.9\%	$&$	60.4\%	$&$	63.2\%	$&$	65.3\%	$&$	67.0\%	$&$	67.1\%	$&$	66.6\%	$\\
$	\beta = 1	$&$	49.6\%	$&$	56.9\%	$&$	62.4\%	$&$	64.0\%	$&$	65.7\%	$&$	65.8\%	$&$	65.4\%	$\\
$	\omega = 0.005	$&$	49.7\%	$&$	61.1\%	$&$	63.0\%	$&$	65.6\%	$&$	67.4\%	$&$	67.4\%	$&$	66.9\%	$\\
$	\omega = 0.02	$&$	49.8\%	$&$	61.6\%	$&$	63.5\%	$&$	65.8\%	$&$	67.6\%	$&$	67.6\%	$&$	67.1\%	$\\
$	\omega = 0.08	$&$	49.9\%	$&$	62.5\%	$&$	63.8\%	$&$	66.2\%	$&$	68.0\%	$&$	68.1\%	$&$	67.5\%	$\\
$	\omega = 0.15	$&$	50.1\%	$&$	62.7\%	$&$	63.7\%	$&$	66.1\%	$&$	68.0\%	$&$	68.2\%	$&$	67.6\%	$\\
\hline
\end{tabular}
\end{center}
\end{table}

\begin{table}
\begin{center}
\caption{Scenario 2. Performance of different algorithms relative to LP (\ref{eq:LP1}).}
\label{tab:simulation2}
\begin{tabular}{|c|c|c|c|c|c|c|c|}
\hline
& $\ON$ & Greedy & BPH & $\ON_+^1$ & $\ON_+^2$ & $\ON_+^3$ & $\ON_+^4$\\
\hline
	Base	&$	50.3\%	$&$	65.4\%	$&$	66.6\%	$&$	67.7\%	$&$	69.6\%	$&$	69.4\%	$&$	69.2\%	$\\
$	\alpha = 0	$&$	48.2\%	$&$	61.2\%	$&$	67.3\%	$&$	67.9\%	$&$	70.1\%	$&$	70.3\%	$&$	70.0\%	$\\
$	\alpha = 0.2	$&$	49.2\%	$&$	63.1\%	$&$	67.3\%	$&$	67.8\%	$&$	69.9\%	$&$	70.1\%	$&$	69.7\%	$\\
$	\alpha = 0.8	$&$	51.7\%	$&$	66.5\%	$&$	65.1\%	$&$	67.0\%	$&$	68.7\%	$&$	68.4\%	$&$	68.2\%	$\\
$	\alpha = 1	$&$	52.5\%	$&$	66.5\%	$&$	65.2\%	$&$	66.1\%	$&$	67.7\%	$&$	67.5\%	$&$	67.1\%	$\\
$	\tau = 2	$&$	51.3\%	$&$	67.4\%	$&$	69.3\%	$&$	69.4\%	$&$	71.5\%	$&$	71.3\%	$&$	71.0\%	$\\
$	\tau = 5	$&$	50.9\%	$&$	66.6\%	$&$	67.8\%	$&$	68.3\%	$&$	70.1\%	$&$	70.0\%	$&$	69.7\%	$\\
$	\tau = 20	$&$	49.7\%	$&$	64.1\%	$&$	66.3\%	$&$	67.4\%	$&$	69.6\%	$&$	69.5\%	$&$	69.2\%	$\\
$	\tau = 30	$&$	49.4\%	$&$	63.5\%	$&$	66.3\%	$&$	67.6\%	$&$	69.6\%	$&$	69.5\%	$&$	69.4\%	$\\
$	\beta = 0	$&$	50.5\%	$&$	66.5\%	$&$	67.1\%	$&$	68.6\%	$&$	70.5\%	$&$	70.4\%	$&$	69.9\%	$\\
$	\beta = 0.2	$&$	50.5\%	$&$	66.4\%	$&$	66.9\%	$&$	68.3\%	$&$	70.3\%	$&$	70.0\%	$&$	69.8\%	$\\
$	\beta = 0.8	$&$	50.1\%	$&$	62.4\%	$&$	65.9\%	$&$	66.2\%	$&$	68.1\%	$&$	67.8\%	$&$	67.7\%	$\\
$	\beta = 1	$&$	50.4\%	$&$	58.2\%	$&$	64.8\%	$&$	64.5\%	$&$	66.5\%	$&$	66.3\%	$&$	66.1\%	$\\
$	\omega = 0.005	$&$	50.2\%	$&$	64.6\%	$&$	66.0\%	$&$	67.1\%	$&$	68.9\%	$&$	68.7\%	$&$	68.6\%	$\\
$	\omega = 0.02	$&$	50.2\%	$&$	64.9\%	$&$	66.2\%	$&$	67.3\%	$&$	69.3\%	$&$	69.0\%	$&$	68.8\%	$\\
$	\omega = 0.08	$&$	50.4\%	$&$	65.8\%	$&$	66.7\%	$&$	68.0\%	$&$	69.8\%	$&$	69.7\%	$&$	69.5\%	$\\
$	\omega = 0.15	$&$	50.3\%	$&$	66.1\%	$&$	67.0\%	$&$	68.2\%	$&$	70.2\%	$&$	70.1\%	$&$	69.8\%	$\\
\hline
\end{tabular}
\end{center}
\end{table}

\begin{table}
\begin{center}
\caption{Scenario 3. Performance of different algorithms relative to LP (\ref{eq:LP1}).}
\label{tab:simulation3}
\begin{tabular}{|c|c|c|c|c|c|c|c|}
\hline
& $\ON$ & Greedy & BPH & $\ON_+^1$ & $\ON_+^2$ & $\ON_+^3$ & $\ON_+^4$\\
\hline
	Base	&$	51.2\%	$&$	63.5\%	$&$	65.2\%	$&$	67.0\%	$&$	68.2\%	$&$	67.7\%	$&$	67.1\%	$\\
$	\alpha = 0	$&$	48.6\%	$&$	58.1\%	$&$	65.8\%	$&$	66.7\%	$&$	68.2\%	$&$	67.7\%	$&$	66.8\%	$\\
$	\alpha = 0.2	$&$	49.7\%	$&$	60.2\%	$&$	66.1\%	$&$	66.8\%	$&$	68.1\%	$&$	67.6\%	$&$	66.9\%	$\\
$	\alpha = 0.8	$&$	53.2\%	$&$	65.9\%	$&$	63.8\%	$&$	66.3\%	$&$	67.6\%	$&$	67.1\%	$&$	66.6\%	$\\
$	\alpha = 1	$&$	54.2\%	$&$	66.2\%	$&$	64.7\%	$&$	65.4\%	$&$	66.5\%	$&$	66.0\%	$&$	65.6\%	$\\
$	\tau = 2	$&$	52.1\%	$&$	64.9\%	$&$	68.1\%	$&$	68.2\%	$&$	69.5\%	$&$	69.0\%	$&$	68.5\%	$\\
$	\tau = 5	$&$	52.0\%	$&$	64.8\%	$&$	66.7\%	$&$	67.4\%	$&$	68.7\%	$&$	68.2\%	$&$	67.6\%	$\\
$	\tau = 20	$&$	50.4\%	$&$	61.9\%	$&$	64.7\%	$&$	66.6\%	$&$	68.1\%	$&$	67.4\%	$&$	66.9\%	$\\
$	\tau = 30	$&$	50.1\%	$&$	61.0\%	$&$	64.9\%	$&$	66.7\%	$&$	68.0\%	$&$	67.5\%	$&$	66.9\%	$\\
$	\beta = 0	$&$	51.0\%	$&$	64.4\%	$&$	65.3\%	$&$	67.4\%	$&$	68.9\%	$&$	68.2\%	$&$	67.6\%	$\\
$	\beta = 0.2	$&$	51.4\%	$&$	64.4\%	$&$	65.1\%	$&$	67.4\%	$&$	68.6\%	$&$	68.1\%	$&$	67.5\%	$\\
$	\beta = 0.8	$&$	50.8\%	$&$	60.5\%	$&$	64.8\%	$&$	64.9\%	$&$	66.3\%	$&$	65.8\%	$&$	65.0\%	$\\
$	\beta = 1	$&$	50.8\%	$&$	56.3\%	$&$	64.0\%	$&$	63.4\%	$&$	64.7\%	$&$	64.2\%	$&$	63.4\%	$\\
$	\omega = 0.005	$&$	50.6\%	$&$	62.8\%	$&$	65.2\%	$&$	66.4\%	$&$	67.7\%	$&$	67.2\%	$&$	66.5\%	$\\
$	\omega = 0.02	$&$	51.0\%	$&$	62.9\%	$&$	65.0\%	$&$	66.4\%	$&$	67.8\%	$&$	67.3\%	$&$	66.5\%	$\\
$	\omega = 0.08	$&$	51.4\%	$&$	63.9\%	$&$	65.2\%	$&$	67.0\%	$&$	68.3\%	$&$	67.8\%	$&$	67.4\%	$\\
$	\omega = 0.15	$&$	51.3\%	$&$	64.2\%	$&$	65.1\%	$&$	67.2\%	$&$	68.6\%	$&$	68.2\%	$&$	67.4\%	$\\
\hline
\end{tabular}
\end{center}
\end{table}

\begin{table}
\begin{center}
\caption{Scenario 4. Performance of different algorithms relative to LP (\ref{eq:LP1}).}
\label{tab:simulation4}
\begin{tabular}{|c|c|c|c|c|c|c|c|}
\hline
& $\ON$ & Greedy & BPH & $\ON_+^1$ & $\ON_+^2$ & $\ON_+^3$ & $\ON_+^4$\\
\hline
	Base	&$	50.4\%	$&$	64.1\%	$&$	66.5\%	$&$	68.6\%	$&$	69.9\%	$&$	69.5\%	$&$	68.8\%	$\\
$	\alpha = 0	$&$	48.1\%	$&$	59.5\%	$&$	66.4\%	$&$	68.8\%	$&$	70.0\%	$&$	69.9\%	$&$	69.1\%	$\\
$	\alpha = 0.2	$&$	49.0\%	$&$	61.2\%	$&$	67.0\%	$&$	68.6\%	$&$	70.1\%	$&$	69.7\%	$&$	68.9\%	$\\
$	\alpha = 0.8	$&$	51.9\%	$&$	65.9\%	$&$	65.3\%	$&$	67.9\%	$&$	69.1\%	$&$	68.8\%	$&$	68.1\%	$\\
$	\alpha = 1	$&$	53.2\%	$&$	65.7\%	$&$	65.3\%	$&$	67.2\%	$&$	68.1\%	$&$	67.8\%	$&$	67.3\%	$\\
$	\tau = 2	$&$	50.9\%	$&$	65.5\%	$&$	69.2\%	$&$	70.0\%	$&$	71.4\%	$&$	70.8\%	$&$	70.3\%	$\\
$	\tau = 5	$&$	50.9\%	$&$	65.2\%	$&$	67.5\%	$&$	69.0\%	$&$	70.4\%	$&$	70.0\%	$&$	69.4\%	$\\
$	\tau = 20	$&$	49.9\%	$&$	62.6\%	$&$	66.0\%	$&$	68.3\%	$&$	69.6\%	$&$	69.2\%	$&$	68.6\%	$\\
$	\tau = 30	$&$	49.5\%	$&$	61.9\%	$&$	65.9\%	$&$	68.3\%	$&$	69.7\%	$&$	69.4\%	$&$	68.7\%	$\\
$	\beta = 0	$&$	50.4\%	$&$	64.5\%	$&$	66.4\%	$&$	68.8\%	$&$	69.9\%	$&$	69.6\%	$&$	68.9\%	$\\
$	\beta = 0.2	$&$	50.3\%	$&$	64.4\%	$&$	66.5\%	$&$	68.7\%	$&$	70.0\%	$&$	69.6\%	$&$	69.1\%	$\\
$	\beta = 0.8	$&$	50.2\%	$&$	62.2\%	$&$	66.4\%	$&$	67.7\%	$&$	68.8\%	$&$	68.4\%	$&$	67.8\%	$\\
$	\beta = 1	$&$	50.3\%	$&$	59.1\%	$&$	65.6\%	$&$	66.4\%	$&$	67.5\%	$&$	67.3\%	$&$	66.5\%	$\\
$	\omega = 0.005	$&$	50.3\%	$&$	63.8\%	$&$	66.4\%	$&$	68.5\%	$&$	69.7\%	$&$	69.2\%	$&$	68.7\%	$\\
$	\omega = 0.02	$&$	50.7\%	$&$	63.9\%	$&$	66.3\%	$&$	68.4\%	$&$	69.7\%	$&$	69.5\%	$&$	68.9\%	$\\
$	\omega = 0.08	$&$	50.3\%	$&$	64.1\%	$&$	66.6\%	$&$	68.6\%	$&$	69.9\%	$&$	69.6\%	$&$	68.9\%	$\\
$	\omega = 0.15	$&$	50.3\%	$&$	64.3\%	$&$	66.6\%	$&$	68.8\%	$&$	69.9\%	$&$	69.6\%	$&$	69.0\%	$\\
\hline
\end{tabular}
\end{center}
\end{table}

\bibliographystyle{ormsv080}
\bibliography{myrefs}

\end{document}